\documentclass[reqno,11pt]{amsart}

\listfiles 	%
\usepackage{fullpage,setspace}
\usepackage{cite}
\usepackage{tfrupee}  %

\usepackage{mathrsfs} %
\usepackage[bottom]{footmisc}  %
\usepackage{enumerate}   %
\usepackage{url,cite,fancyheadings}
\usepackage{enumerate}
\usepackage{amsmath,amssymb,amscd,mathrsfs,latexsym}
		\usepackage{rsfso}  %
		\usepackage{mathtools}
\usepackage{mathdots}
\usepackage{tikz}

\usepackage{ifthen}     %

\newcommand\pubpri[2]{%
\ifthenelse{\equal{\version}{public}}%
{{#1}}%
{\ifthenelse{\equal{\finalized}{no}}{\marginpar{\scshape\small Pubpri Alert}{#2}}{#2}}{}}
\newcommand\pubprinoalert[2]{%
\ifthenelse{\equal{\version}{public}}%
{{#1}}%
{#2}}
\newcommand\ignore[1]{}
\usepackage{color}
\providecommand\wantcolor{yes}   %
\definecolor{backgroundyellow}{cmyk}{.2,.1,.8,.2}
\definecolor{backgroundblue}{rgb}{0,0,1}
\definecolor{backgroundred}{rgb}{1,0,0}
\definecolor{backgroundmagenta}{cmyk}{0,1,0,0}
\ifthenelse{\equal{\wantcolor}{no}}{\renewcommand\color[1]{}}{}

\usepackage{enumitem}

\newcommand\mysubsubsection[1]{%
		\subsubsection{\sffamily\upshape\mdseries #1}}

\newcommand\mysss{\mysubsubsection}

\usepackage{chngcntr}
\counterwithin{figure}{section}
\usepackage{url,hyperref}  %
\hypersetup{colorlinks=true,
linkcolor=blue,
filecolor=magenta,
urlcolor=cyan,}
\urlstyle{same}
\usepackage{graphicx,wrapfig}
\providecommand{\theoremnumbering}{document}

\ifthenelse{\equal{\theoremnumbering}{section}}{\newtheorem{annotation}{Annotation}[section]}%
{\ifthenelse{\equal{\theoremnumbering}{subsection}}{\newtheorem{annotation}{Annotation}[subsection]}{\newtheorem{annotation}{Annotation}}}
\newtheorem{theorem}[annotation]{%
		Theorem}
\newtheorem{lemma}[annotation]{%
		Lemma}
\newtheorem{definition}[annotation]{%
		Definition}
\newtheorem{corollary}[annotation]{%
		Corollary}
\newtheorem{proposition}[annotation]{%
		Proposition}

\newtheorem{example}[annotation]{%
		Example}
\newcommand\bexample{\begin{example}\begin{rm}}
\newcommand\eexample{\end{rm}\hfill$\Box$\end{example}}
\newtheorem{examplenobox}[annotation]{%
		Example}
\newcommand\bexamplenobox{\begin{examplenobox}\begin{rm}}
\newcommand\eexamplenobox{\end{rm}\end{examplenobox}}
\newtheorem{exercise}[annotation]{%
		Exercise}
\newcommand\bexercise{\noindent\begin{exercise}\begin{rm}}
\newcommand\eexercise{\end{rm}\end{exercise}}
\newtheorem{notation}[annotation]{%
		Notation}
\newcommand\bnotation{\begin{notation}\begin{rm}}
\newcommand\enotation{\end{rm}\end{notation}}

\newtheorem{remark}[annotation]{%
		Remark}
\newcommand\bremark{\begin{remark}%
\begin{upshape}}
\newcommand\eremark{\end{upshape}%
\end{remark}}
\newenvironment{remark*}{%
\par\noindent{\scshape 
  Remark: }\begin{rm}}{\hfill\end{rm}\newline} 
\newcommand\bremarkstar{\begin{remark*}}
\newcommand\eremarkstar{\end{remark*}}
\newcommand\bdefn{\begin{definition}%
\begin{upshape}}
\newcommand\edefn{\end{upshape}%
\end{definition}}
\newtheorem{caveat}[annotation]{%
		Caveat}
\newcommand\bcaveat{\begin{caveat}%
\begin{upshape}}
\newcommand\ecaveat{\end{upshape}%
\end{caveat}}
\newenvironment{caveatstar}{%
\par\noindent{\scshape\bfseries
  Caveat: }\begin{rm}}{\end{rm}\newline} 
\newcommand\bcaveatstar{\begin{caveatstar}}%
\newcommand\ecaveatstar{\end{caveatstar}}
\newenvironment{myproof}{%
\par\noindent{\scshape 
  Proof: }\begin{rm}}{\hfill$\Box$\end{rm}\newline} 
\newcommand\bmyproof{\begin{myproof}}
\newcommand\emyproof{\end{myproof}}
\newenvironment{myproofnobox}{%
\par\noindent{\scshape Proof: }\begin{rm}}{\end{rm}\hfill\newline}
\newcommand\bmyproofnobox{\begin{myproofnobox}}
\newcommand\emyproofnobox{\end{myproofnobox}}
\newenvironment{myproofof}[1]{%
\par\noindent{\scshape 
  Proof (of~#1): }\begin{rm}}{\hfill$\Box$\end{rm}\newline} 
\newcommand\bmyproofof{\begin{myproofof}}
\newcommand\emyproofof{\end{myproofof}}
\newenvironment{myproofofnobox}[1]{%
\par\noindent{\scshape 
  Proof (of~#1): }\begin{rm}}{\end{rm}\hfill\newline} 
\newcommand\bmyproofofnobox{\begin{myproofofnobox}}
\newcommand\emyproofofnobox{\end{myproofofnobox}}
\newenvironment{solution}{%
\par\noindent{\scshape Solution: }\begin{rm}}{\hfill$\Box$\end{rm}\newline}
\newenvironment{solutionnobox}{%
\par\noindent{\scshape Solution: }\begin{rm}}{\end{rm}}
\newcommand\bsolution{\begin{solution}\begin{rm}}
\newcommand\esolution{\end{rm}\end{solution}}
\newcommand\bsolutionnobox{\begin{solutionnobox}\begin{rm}}
\newcommand\esolutionnobox{\end{rm}\end{solutionnobox}}
\newcommand\bthm{\begin{theorem}}
\newcommand\ethm{\end{theorem}}
\newcommand\bcor{\begin{corollary}}
\newcommand\ecor{\end{corollary}}
\newcommand\blemma{\begin{lemma}}
\newcommand\elemma{\end{lemma}}
\newcommand\bprop{\begin{proposition}}
\newcommand\eprop{\end{proposition}}
\newcommand\beqn{\begin{equation}}
\newcommand\eeqn{\end{equation}}
\newcommand\beqnstar{\begin{equation*}}
\newcommand\eeqnstar{\end{equation*}}
\numberwithin{equation}{section}
\usepackage{amsthm}
\usepackage{textcomp}
\usepackage{calrsfs,mathrsfs}  %
\newcommand\mtitle[1]%
{\marginpar{\sffamily\raggedright\upshape\small #1}}

\providecommand\finalized{yes}
\ifthenelse{\equal{\finalized}{yes}}{}{\marginparwidth=2cm}
\ifthenelse{\equal{\finalized}{yes}}%
{}%
{}
\ifthenelse{\equal{\finalized}{yes}}%
{\newcommand\checked[1]{}}%
{\newcommand\checked[1]{\marginpar{[{\ttfamily\upshape\tiny CHECKED: #1}]}}}
\ifthenelse{\equal{\finalized}{yes}}%
{\newcommand\spellchecked[1]{}}%
{\newcommand\spellchecked[1]{\marginpar{[{\ttfamily\upshape\tiny SPELLCHECKED: #1}]}}}
\providecommand\version{public}   %
\ifthenelse{\equal{\version}{public}}%
{\newcommand\mcomment[1]{}}%
{\newcommand\mcomment[1]{\marginpar{{\raggedright\sffamily\upshape\small
\begin{spacing}{0.75} #1\end{spacing}}}}}
\ifthenelse{\equal{\version}{public}}%
{\newcommand\fcomment[1]{}}%
{\newcommand\fcomment[1]{\footnote{#1}}}
\ifthenelse{\equal{\version}{public}}%
{\newcommand\comment[1]{}}%
{\newcommand\comment[1]{{\small #1}}}

\newcommand{\be}{\begin{enumerate}}
\newcommand{\ee}{\end{enumerate}}
\newcommand{\bi}{\begin{itemize}}
\newcommand{\ei}{\end{itemize}}
\newcommand{\beq}{\begin{equation}}
\newcommand{\eeq}{\end{equation}}
\newcommand{\complex}{\mathbb{C}}

\newcommand{\rationals}{\mathbb{Q}}
\newcommand{\integers}{\mathbb{Z}}

\newcommand{\prinheis}{\mathfrak{s}}
\newcommand{\alphac}{\alpha^\vee}

\DeclareMathOperator{\End}{End}

\newcommand{\lus}[1][t]{m^{\lambda}_{\mu}(#1)}
\newcommand{\lusztig}[2]{m^{#1}_{#2}(t)} 
\newcommand{\tko}[1][]{K(#1; \,t)}

\newcommand{\kost}[1][\mu]{K_{\lambda #1}(t)}

\newcommand{\hl}[1][\lambda]{P_{#1}(t)}

\newcommand{\lamtil}{\widetilde{\lambda}}
\newcommand{\afatype}[1][]{A_{#1}^{(1)}}
\newcommand{\afsl}[1][2]{\ensuremath{\widehat{\mathfrak{sl}}_{\,#1}}}

\newcommand{\scrb}{\mathcal{B}}
\newcommand{\vacm}{L(\Lambda_0)}
\newcommand{\vac}[1][0]{| #1 \rangle}

\newcommand{\fock}[1][]{\mathcal{F}_{#1}}
\newcommand{\focktil}{\widetilde{\fock}}

\newcommand{\coxn}{\mathbf{h}}
\newcommand{\coxdual}{\mathbf{h^\vee}}

\newcommand{\omd}[1][d]{\omega^{\,#1}}

\newcommand{\scr}{\mathcal}

\newcommand{\liegf}{\overline{\lieg}}
\newcommand{\liehf}{\overline{\lieh}}
\newcommand{\lieg}{\mathfrak{g}}
\newcommand{\lieh}{\mathfrak{h}}
\newcommand{\walg}{\mathcal{W}}
\newcommand{\woneinf}{\mathcal{W}_{1+\infty}}
\newcommand{\zhu}{\mathfrak{Zh}(\walg)}
\newcommand{\centug}{{Z}(\mathrm{U}\liegf)}
\newcommand{\vermaw}{\mathbf{M}}
\newcommand{\vermahigh}{\mathbf{1}}
\newcommand{\bas}{\mathcal{B}}

\newcommand{\homheis}[1][]{\widehat{\lieh}_{#1}}
\newcommand{\hhplus}[1][]{\homheis[#1]^+}
\newcommand{\hhminus}[1][]{\homheis[#1]^-}

\newcommand{\form}[2]{(#1 \,|\, #2)}
\newcommand{\aform}[2]{\langle #1 \, , \, #2 \rangle}
\newcommand{\cttinv}{\complex\left[t,t^{-1}\right]}
\newcommand{\twistga}[1][Q]{\complex_\varepsilon[#1]}
\newcommand{\roots}[1][]{\Delta_{#1}}
\newcommand{\confvec}{\omega}
\newcommand{\vqgrad}[1][d]{V_Q^{\,[#1]}}
\newcommand{\wgrad}[1][d]{\walg^{\,[#1]}}
\newcommand{\vqtilgrad}[1][d]{V_{\Qtil}^{\,[#1]}}

\newcommand{\Qfin}{Q}
\newcommand{\Qtil}{\widetilde{Q}}

\newcommand{\hwvec}{v_{\Lambda_0}}
\newcommand{\lattva}[1][\Qfin]{V_{#1}}
\newcommand{\gln}[1][\ell+1]{\ensuremath{\mathfrak{gl}_{#1}}\xspace}
\newcommand{\sln}[1][\ell+1]{\ensuremath{\mathfrak{sl}_{#1}}\xspace}
\newcommand{\woi}{\walg_{1+\infty}}
\newcommand{\lzo}{L^1_0}

\newcommand{\cch}[1][\lambda]{\gamma_{{}_{#1}}}
\newcommand{\mtop}[1][M]{{#1}_{\mathrm{top}}}
\newcommand{\scrD}[1][D]{\mathcal{#1}}
\DeclareMathOperator{\Sym}{S}
\DeclareMathOperator{\Aut}{Aut}
\DeclareMathOperator{\Inn}{Inn}
\DeclareMathOperator{\Ind}{Ind}
\DeclareMathOperator{\Ad}{Ad}
\DeclareMathOperator{\ad}{ad}
\DeclareMathOperator{\id}{id}
\DeclareMathOperator{\gr}{gr}
\DeclareMathOperator{\ch}{ch}
\DeclareMathOperator{\tr}{tr}
\DeclareMathOperator{\Hilb}{{\mathbf H}}
\DeclareMathOperator{\Res}{Res}

\usepackage{xspace}

\begin{document}
\title[]{The Brylinski filtration for affine Kac-Moody algebras and representations of $\mathcal{W}$-algebras}
\author{Suresh Govindarajan}
\address{Department of Physics\\
Indian Institute of Technology Madras\\
Chennai, India.}
\email{suresh@physics.iitm.ac.in}
\author{Sachin S. Sharma}
\address{Department of Mathematics and Statistics\\
Indian Institute of Technology\\
Kanpur, India.}
\email{sachinsh@iitk.ac.in}
\author{Sankaran Viswanath}
\address{The Institute of Mathematical Sciences, HBNI, Chennai, India.}
\email{svis@imsc.res.in}
\begin{abstract}
We study the Brylinski filtration induced by a principal Heisenberg subalgebra of an affine Kac-Moody algebra $\lieg$, a notion first introduced by Slofstra. The associated graded space of this filtration on dominant weight spaces of integrable highest weight modules of $\lieg$ has Hilbert series coinciding with Lusztig's $t$-analog of weight multiplicities. For the level 1 vacuum module $\vacm$ of affine Kac-Moody algebras of type $A$, we show that the Brylinski filtration may be most naturally understood in terms of representations of the corresponding $\walg$-algebra. We show that the sum of dominant weight spaces of $\vacm$ in the principal vertex operator realization forms an irreducible Verma module of $\walg$ and that the Brylinski filtration is induced by the Poincar\'{e}-Birkhoff-Witt basis of this module. This explicitly determines the subspaces of the Brylinski filtration. Our basis may be viewed as the analog  of Feigin-Frenkel's basis of $\walg$ for the $\walg$-action on the principal rather than on the homogeneous realization of $\vacm$.
\end{abstract}
\subjclass[2010]{17B67, 17B69}
\keywords{Brylinski filtration, $\walg$-algebras, principal Heisenberg algebra, level 1 vacuum module, affine Kac-Moody algebras}
\thanks{Corresponding author: Sankaran Viswanath}
\maketitle
\section{Introduction}
\subsection{}
We assume throughout that the ground field is $\complex$. Let $\lieg$ denote a symmetrizable Kac-Moody algebra. Let $L(\lambda)$  be an
integrable highest weight representation of $\lieg$ and $\mu$ a dominant weight of $L(\lambda)$. Lusztig's $t$-analog of weight multiplicity is defined to be the polynomial \cite{lusztig}:
\begin{equation}\label{eq:lustanalog}
  \lus = \sum_{w \in W} \varepsilon(w) \,\tko[w(\lambda + \rho) - (\mu+\rho)],
\end{equation}
where $\varepsilon$ is the sign character of the Weyl group $W$ of $\lieg$, $\,\rho$ is the Weyl vector and $K$ is the $t$-analog of Kostant's  partition function defined by:
\[ \sum_{\beta \in Q_+} \tko[\beta]\,e^{\beta} = \prod_{\alpha \in \roots[+]} \left(1-te^{\alpha}\right)^{-m_\alpha}. \]
Here $\roots[+]$ is the set of positive roots, $Q_+ = \integers_{\geq 0}(\roots[+])$ and $m_\alpha$ is the multiplicity of the root $\alpha$. At $t=1$, $\lus$ reduces to the weight multiplicity $\dim L(\lambda)_\mu$.
\subsection{}
When $\lieg$ is finite-dimensional, it is a classical fact that $\lus$ has non-negative integral coefficients. A principal nilpotent element of $\lieg$ induces a filtration on $L(\lambda)$, called the Brylinski-Kostant filtration. The associated graded space of its restriction to $L(\lambda)_\mu$ has Hilbert-Poincar\'{e} series $\lus$ \cite{rkbryl, broer, JLZ}. 

\subsection{}
For $\lieg$ of affine type, it was conjectured by Braverman-Finkelberg \cite{BF} that the analogous result holds, i.e., that the associated graded space of the principal nilpotent filtration on  $L(\lambda)_\mu$ has Hilbert-Poincar\'{e} series $\lus$. Slofstra \cite{slofstra} showed that this was false, but that it becomes true if the principal nilpotent filtration is replaced by the principal Heisenberg filtration \cite[Theorem 2.2]{slofstra}.  The latter is the filtration of the weight spaces induced by the positive part of the principal Heisenberg subalgebra  of $\lieg$ (see \S\ref{sec:bryl-def} below); we shall call this the Brylinski filtration, following Slofstra.

\subsection{} There is a second interpretation of the polynomials $\lus$.  In the finite-dimensional case, they coincide with the Kostka-Foulkes polynomials $\kost$, the coefficients that occur when the character of $L(\lambda)$ is expressed in the basis of the Hall-Littlewood polynomials $\hl[\mu]$ \cite{kato,rkg}. This result was generalized to all symmetrizable Kac-Moody algebras in \cite[Theorem 1]{svis-kfp}. In particular, when $\lieg$ is a simply-laced affine Kac-Moody algebra, this interpretation enables a closed-form computation of the $\lus$ for the level 1 vacuum module (basic representation) $\vacm$ of $\lieg$. This takes the generating function form \cite[Corollary 2]{svis-kfp}:
\begin{equation}\label{eq:tsf-level1}
\sum_{n \geq 0} \lusztig{\Lambda_0}{\Lambda_0 - n\delta} \, q^n = \prod_{k=1}^{\ell}{\prod_{n=1}^{\infty}{(1-t^{d_{k}}q^n)^{-1}}}, 
\end{equation}
where $d_{k}\,(1\leq k\leq \ell)$ are the degrees of the underlying finite-dimensional
simple Lie algebra $\liegf,\;\Lambda_0$ is the dominant weight of $\lieg$ of level 1 which vanishes on the Cartan subalgebra $\liehf$ of $\liegf$ and $\delta$ is the null root of $\lieg$. As shown in \cite{svis-kfp}, equation~\eqref{eq:tsf-level1} is essentially equivalent to 
the $q$-Macdonald-Mehta constant term identity \cite[Theorem 5.3]{dmmc} proved by Cherednik via Double Affine Hecke Algebra (DAHA) techniques. We recall here that the degrees $d_k$ are nothing but the exponents of $\liegf$ plus one.

\subsection{}
Consider the subspace $Z=\vacm^{\liehf}$ of $\liehf$-invariants of $\vacm$. It admits the (``energy'') grading:
\[ Z= \bigoplus_{n \geq 0} Z_n := \bigoplus_{n \geq 0} L(\Lambda_0)_{\Lambda_0 - n \delta}. \]
We restrict the Brylinski filtration to $Z$; the associated graded space $\gr Z$ is then a bi-graded vector space. From Slofstra's result mentioned above \cite[Theorem 2.2]{slofstra} and equation~\eqref{eq:tsf-level1}, we obtain its two-variable Hilbert-Poincar\'{e} series:
\begin{equation} \label{eq:hilbZ}
  \Hilb(\gr Z; \,t,q) := \sum_{n \geq 0} \sum_{i \geq 0} \dim \left( F^i Z_n \,/ F^{i-1}  Z_n\right) \, t^i q^n = \prod_{k=1}^{\ell} \prod_{n=1}^{\infty}{(1-t^{d_{k}}q^n)^{-1}},
\end{equation}
where $F^i \vacm$ denotes the $i^{\,\mathrm{th}}$ subspace in the Brylinski filtration (\S\ref{sec:bryl-def}) of $\vacm$ and $F^i U = U \cap F^i \vacm$ for a subspace $U$ of $\vacm$.
We note that the $t$-degree arises from the Brylinski filtration while the $q$-degree is the energy grading. 

\subsection{}

Given a subspace $U$ of $\vacm$, a basis $\scrb$ of $U$ is said to be {\em Brylinski-compatible} if for all $d \geq 0$, $\scrb \cap F^d U$ is a basis of $F^d U$; in other words if the image of $\scrb$ in $\gr U$ is a basis of $\gr U$. The main result of this paper is the construction of a natural Brylinski-compatible basis of $Z$, for the untwisted affine Lie algebras of type $A$.

To state our main theorem, we briefly recall the relevant facts about $\walg$-algebras in general. Let $\lieg$ denote a simply-laced affine Lie algebra, $\liegf$ the underlying finite-dimensional simple Lie algebra and $\walg = \walg(\liegf)$ the corresponding $\walg$-algebra. This is a vertex algebra which can be realized as a subalgebra of the lattice vertex algebra $\lattva$, where $\Qfin$ is the root lattice of $\liegf$. Let $d_1 \leqslant d_2 \leqslant \cdots \leqslant d_\ell$ be the list of degrees of $\liegf$ as in \eqref{eq:tsf-level1}.
The following theorem, due to Feigin and Frenkel \cite{feigin-frenkel}, states an important fact about $\walg$-algebras.
\begin{theorem}[Feigin-Frenkel]\label{thm:ff} 
There exist elements $\omd[i] \in \walg$ of degree $d_i$ such that $\walg$ is freely generated by  $\omd[1],\, \omd[2],\, \cdots,\, \omd[\ell]$.
\end{theorem}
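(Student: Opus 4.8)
The plan is to prove this by computing the graded character of $\walg$, exhibiting $\ell$ strong generators of conformal weights $d_1,\dots,d_\ell$, and then matching the two against a PBW-type basis to see that the generation is free. I would realize $\walg$ via the quantum Drinfeld--Sokolov reduction of $\liegf$: for the level $k$ attached to the realization $\walg\subset\lattva$ one has $\walg\cong H^0_{DS}(V^k(\liegf))$, the zeroth cohomology of the BRST complex $C^{\,\bullet}=V^k(\liegf)\otimes\Lambda^{\,\bullet}$ built from the charged fermions on $\lien[+]$, with differential $d=d_{st}+d_\chi$ twisted by the principal character $\chi$ of $\lien[+]$. (In the free-field language used in the paper this is the same as describing $\walg$ as the joint kernel $\bigcap_{i=1}^{\ell}\ker S_i$ of the screening operators $S_i=\Res_{z}Y(e^{-\alpha_i},z)$; the equivalence of the two pictures is itself due to Feigin--Frenkel.) Give $C^{\,\bullet}$ the Kazhdan grading, for which $d$ is homogeneous, and apply the Euler--Poincar\'e principle: the graded Euler characteristic of $C^{\,\bullet}$ equals $\sum_j(-1)^j\ch_q H^j_{DS}$, and a direct computation --- using the principal $\mathfrak{sl}_2$-decomposition $\liegf=\bigoplus_{i=1}^{\ell}V_{2(d_i-1)}$ to organize the fermionic cancellations --- identifies this Euler characteristic with $\prod_{i=1}^{\ell}\prod_{n\geq d_i}(1-q^{n})^{-1}$. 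Granting the vanishing $H^j_{DS}(V^k(\liegf))=0$ for $j\neq 0$, this yields $\ch_q\walg=\prod_{i=1}^{\ell}\prod_{n\geq d_i}(1-q^{n})^{-1}$.

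Next I would produce the generators. For each exponent $d_i-1$ choose a lowest-weight vector $p_i$ of the summand $V_{2(d_i-1)}\subset\liegf$ for the principal $\mathfrak{sl}_2$; the reduction furnishes a $d$-cocycle $\omd[i]\in C^{\,\bullet}$ whose leading Kazhdan term is $p_i(-1)\vac$, so that $\omd[i]$ has conformal weight $d_i$. That these cocycles represent nonzero classes, and that together they strongly generate $\walg$, I would deduce from the classical limit: $\walg$ carries a decreasing (Li-type) filtration whose associated graded $\gr\walg$ is the classical $\walg$-algebra, i.e.\ the differential algebra of functions on the arc space of the Kostant slice $f+\liegf^{\,e}$. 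By Kostant's transversality theorem, $f+\liegf^{\,e}$ is an affine space with homogeneous coordinates of degrees $d_1,\dots,d_\ell$, so the classical $\walg$-algebra is freely generated, as a differential commutative algebra, by $\ell$ elements of those degrees.

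Finally, I would lift these free generators of $\gr\walg$ to the fields $\omd[i]$ constructed above. The normally ordered monomials $\,:\!\partial^{j_1}\omd[i_1]\cdots\partial^{j_s}\omd[i_s]\!:\,$ then have linearly independent symbols in $\gr\walg$, hence are linearly independent in $\walg$, and they span $\walg$ because their symbols span $\gr\walg$ and the filtration is exhaustive and separated --- which is exactly the assertion that $\walg$ is freely generated by $\omd[1],\dots,\omd[\ell]$. (Alternatively, once $\ch_q\walg$ is in hand one can finish purely by characters: the $\omd[i]$ generate a vertex subalgebra of $\walg$ whose character is at most $\prod_{i}\prod_{n\geq d_i}(1-q^n)^{-1}$, with equality precisely when the generation is free, and the two characters agree by the first step.) The main obstacle is the cohomology-vanishing input $H^{\neq 0}_{DS}=0$ --- equivalently, that the complex of screening operators has cohomology concentrated in degree $0$ with the predicted character. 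This is the technical heart of Feigin--Frenkel's argument; it is proved by a spectral-sequence analysis of the filtration by fermionic charge (with later streamlinings by de Sole--Kac and Arakawa), and it is what makes the Euler--Poincar\'e computation of $\ch_q\walg$ legitimate. By contrast, the construction of the $\omd[i]$ and the passage from classical to quantum free generation are formal filtered/PBW arguments.
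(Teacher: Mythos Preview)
The paper does not prove Theorem~\ref{thm:ff}; it is quoted as a known result of Feigin and Frenkel \cite{feigin-frenkel} and used as black-box input for the rest of the argument. There is therefore no ``paper's own proof'' to compare your proposal against.

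That said, your sketch is a faithful outline of how the theorem is actually established in the literature: compute the graded character of $\walg$ via quantum Drinfeld--Sokolov reduction and cohomology vanishing, produce generators $\omd[i]$ of conformal weight $d_i$ from lowest-weight vectors for the principal $\mathfrak{sl}_2$, and deduce free generation either by passing to the associated graded (the classical $\walg$-algebra, i.e.\ functions on the Kostant slice) or by a character comparison. You correctly identify the vanishing $H^{\neq 0}_{DS}=0$ as the substantive technical input. For the purposes of this paper, however, none of this machinery is needed: the authors simply invoke the statement and the resulting PBW basis \eqref{eq:ffbasis}, and all of their own arguments take place downstream of that.
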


\noindent
For $\lieg = \afatype[\ell] \; (\ell \geqslant 1)$, the degrees $d_i$ are $2, 3, \cdots, \ell+1$. The field $Y(\omd[i],z)$ has conformal weight $d_i$ and we write:
\[ Y(\omd[i], z) = \sum_{n \in \integers} \omd[i]_n \,z^{-n - d_i}.\]
Here $\omd[1]$ is the conformal vector of $\walg$, which corresponds to the Virasoro field. Let $\vac$ denote the vacuum vector of $\walg$. Theorem~\ref{thm:ff} states that the following is a basis of $\walg$ (in PBW fashion):

\beq\label{eq:ffbasis}
\omd[p_1]_{k_1}\; \omd[p_2]_{k_2}\; \cdots\; \omd[p_r]_{k_r} \vac
\eeq
\smallskip
with (i) $r \geqslant 0$\quad(ii) $\ell \geqslant p_1 \geqslant p_2 \geqslant \cdots \geqslant p_r \geqslant 1$ \quad(iii) $k_j \leqslant -d_{p_j}$ for all $j$\quad   and (iv) if $p_i = p_{i+1}$, then $k_i \leqslant k_{i+1}$.

\subsection{}\label{sec:prin-w-mod}

The level 1 vacuum module $\vacm$ of $\lieg$ has many realizations in terms of vertex operators, one for each conjugacy class of the Weyl group $W(\liegf)$ of $\liegf$ \cite{kac-peterson-112constructions}. The principal realization of $\vacm$  corresponds to the conjugacy class of a Coxeter element $\sigma \in W(\liegf)$. This realization endows $\vacm$ with the structure of a $\sigma$-twisted representation of the lattice vertex algebra $\lattva$. When restricted to the vertex subalgebra $\walg \subset \lattva$, this representation becomes untwisted, since $\walg$ is pointwise fixed by $\sigma$ (and indeed by every Weyl group element) \cite{bakalov-milanov}. 
The following is our main theorem.
\begin{theorem}\label{thm:main-thm-intro}
Let $\lieg = \afatype[\ell]$. Consider the principal realization of $\vacm$ as a $\walg$-module and let $\hwvec$ denote a highest weight vector of $\vacm$.
\be
\item[(a)] The $\walg$-submodule of $\vacm$ generated by $\hwvec$ is exactly the space $Z = \vacm^{\liehf}$.
\item[(b)] $Z$ is isomorphic to a Verma module of $\walg$, and is irreducible.
\item[(c)] The following vectors form a Brylinski-compatible basis of $Z$:
\beq\label{eq:ffbasis-twisted}
\omd[p_1]_{k_1}(\sigma)\; \omd[p_2]_{k_2}(\sigma)\; \cdots\; \omd[p_r]_{k_r}(\sigma) \,\hwvec,
\eeq
\smallskip
where (i) $r \geqslant 0$\quad(ii) $\ell \geqslant p_1 \geqslant p_2 \geqslant \cdots \geqslant p_r \geqslant 1$ \quad(iii) $k_j \leqslant -1$ for all $j$\quad   and (iv) if $p_i = p_{i+1}\,$, then $k_i \leqslant k_{i+1}$.
\ee
\end{theorem}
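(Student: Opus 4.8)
The plan is to reduce the whole theorem to three claims about the vectors $v_{p,k} := \omd[p_1]_{k_1}(\sigma)\cdots\omd[p_r]_{k_r}(\sigma)\,\hwvec$ indexed by tuples satisfying (i)--(iv) of part~(c): (A) each $v_{p,k}$ lies in $Z$ and is homogeneous of energy $\sum_j(-k_j)$ for the $\delta$-grading; (B) $v_{p,k}\in F^{\,d}Z$ with $d=\sum_j d_{p_j}$; (C) the $v_{p,k}$ are linearly independent. Granting these, the endgame is a dimension count against the known series. By \eqref{eq:tsf-level1} the number of tuples with $\sum_j(-k_j)=n$ equals $\dim Z_n$, so by (A) and (C) the $v_{p,k}$ of energy $n$ form a basis of $Z_n$; hence $\{v_{p,k}\}$ is a basis of $Z$, forcing $\walg\hwvec=Z$ (part~(a)). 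Since $\hwvec$ is a highest weight vector for $\walg$ (shown below) and $\{v_{p,k}\}$ is exactly the set of PBW monomials of the $\walg$-Verma module $\vermaw(\chi)$, where $\chi$ records the eigenvalues of the $\omd[k]_0(\sigma)$ on $\hwvec$, the canonical surjection $\vermaw(\chi)\twoheadrightarrow\walg\hwvec=Z$ is a bijection on PBW monomials, hence an isomorphism --- the first assertion of (b). For (c), fix $i$: by (A) and (B) the $v_{p,k}$ with $\sum_j d_{p_j}\le i$ and energy $n$ lie in $F^iZ_n$, are independent, and by \eqref{eq:hilbZ} number $\sum_{i'\le i}\dim(F^{i'}Z_n/F^{i'-1}Z_n)=\dim F^iZ_n$; so they are a basis of $F^iZ_n$, no other $v_{p,k}$ lies in $F^iZ$, and $\{v_{p,k}\}\cap F^iZ$ is a basis of $F^iZ$ --- i.e.\ \eqref{eq:ffbasis-twisted} is Brylinski-compatible (and the Brylinski degree of $v_{p,k}$ is exactly $\sum_j d_{p_j}$). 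Finally, irreducibility: the contravariant form on the integrable module $\vacm$ is positive definite, hence so is its restriction to $Z$; with the generators $\omd[k]$ chosen self-adjoint one has $\big(\omd[k]_n(\sigma)\big)^{\dagger}=\pm\,\omd[k]_{-n}(\sigma)$, so this restricted form is the contravariant form of $\vermaw(\chi)$, which is therefore irreducible.

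For (A) and the highest weight property I would argue as follows. In the principal realization $\vacm$ is a $\sigma$-twisted $\lattva$-module whose restriction to the Heisenberg vertex subalgebra $\lattva^{\liehf}\supseteq\walg$ (Feigin--Frenkel) is the Fock space $S(\prh^-)$ of the principal Heisenberg, with $\lattva^{\liehf}$ acting through the principal Heisenberg currents. As $\liehf$ is fixed pointwise by the automorphism underlying the principal realization, the Cartan currents are untwisted and their zero modes $a_0(\sigma)$, $a\in\liehf$, realize the $\liehf$-action on $\vacm$; since every $w\in\walg$ has zero $\liehf$-weight, $[a_0(\sigma),Y^\sigma(w,z)]=Y^\sigma(a_0w,z)=0$, so each $\omd[k]_n(\sigma)$ commutes with $\liehf$ and preserves $Z=\bigoplus_n\vacm_{\Lambda_0-n\delta}$. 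Moreover $\omd[k]_n(\sigma)$ is homogeneous of degree $-n$ for the $\walg$-conformal grading, which on the $\delta$-string $Z$ coincides up to an additive constant with the $\delta$-grading (in the principal realization $\omd[1]$ is a rescaling of the principal energy operator); together with $\hwvec\in Z_0$ this yields (A), and also shows $\omd[k]_n(\sigma)\hwvec=0$ for $n>0$ (it strictly lowers energy) while $\omd[k]_0(\sigma)$ preserves $\complex\hwvec$, so $\hwvec$ is a highest weight vector for $\walg$.

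For (B) I would use the Miura (free-field) presentation of the Feigin--Frenkel generators: each $\omd[k]$ is a normal-ordered differential polynomial in the $\liehf$-Heisenberg currents involving at most $d_k$ factors. Transported to the $\sigma$-twisted module, $Y^\sigma(\omd[k],z)$ is the same differential polynomial in the principal Heisenberg currents up to corrections with strictly fewer factors (the twisted normal-ordering anomalies), so every mode $\omd[k]_n(\sigma)$ is a sum of products of at most $d_k$ principal Heisenberg modes. A direct check against the definition of $F^\bullet$ (\S\ref{sec:bryl-def}) shows that a creation mode of $\prh$ raises the Brylinski filtration by at most $1$ and an annihilation mode lowers it, so a product of $j$ such modes raises $F^\bullet$ by at most $j$; applying this with $j\le d_k$ and using $\hwvec\in F^0Z$ gives (B).

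The main obstacle is (C). The natural route is a leading-term argument inside $S(\prh^-)$: by the presentation above, the top Brylinski-degree part of $\omd[k]_n(\sigma)$ --- the part with exactly $d_k$ factors --- is the elementary-symmetric-type expression in the twisted Miura bosons, whose modes are explicit Fourier combinations of the principal Heisenberg creation operators $p_{-j}$ (the Coxeter twist cyclically permuting the $\ell+1$ Miura bosons). One then has to verify that, on multiplying these expressions out and applying them to $\hwvec$, distinct tuples $(p_1,k_1),\ldots,(p_r,k_r)$ produce distinct leading monomials in the $p_{-j}$ with nonzero coefficient; controlling the twisting cocycle and the normal-ordering corrections in this computation is where the real work lies. (Alternatively (C) follows from irreducibility of $\vermaw(\chi)$, which one could establish by computing $\chi$ from the twist data and invoking the contravariant-form / Shapovalov determinant for $\walg(\slt[\ell+1])$ --- but the leading-term argument is what makes the basis \eqref{eq:ffbasis-twisted} explicit.)
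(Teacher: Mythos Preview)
Your overall architecture --- reduce to (A), (B), (C), then run the dimension count against \eqref{eq:tsf-level1} and \eqref{eq:hilbZ} --- is exactly the paper's endgame, and your argument for (B) is essentially Lemma~\ref{lem:fockmodes-bryl}. The discrepancy is in how (A) and (C) get established.

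Your argument for (A) contains a genuine error: you write that ``$\liehf$ is fixed pointwise by the automorphism underlying the principal realization'', but the Coxeter element $\sigma$ has \emph{no} nonzero fixed vectors on $\liehf$ (indeed $h_{(j)}(\sigma)=0$ for all $j\in\integers$, as noted in \S\ref{sec:prinvo-bryl}). So the Cartan currents are genuinely twisted in $M_\sigma$, there are no zero modes $a_0(\sigma)$ for $a\in\liehf$, and the $\liehf$-action on $\vacm$ is \emph{not} realized in $M_\sigma$ by such modes. Your commutator argument $[a_0(\sigma),Y^\sigma(w,z)]=0$ therefore does not apply, and the claim that $\omd[k]_n(\sigma)$ preserves $Z$ needs a different justification. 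The paper supplies one by passing to the $\zeta$-twisted realization $M_\zeta$ (\S\ref{sec:zeta-voc}), where $\zeta=\Ad\exp(2\pi i\rho^\vee/\coxn)$ \emph{does} fix $\liehf$ pointwise; there $Z$ is identified with $\fock\otimes e^{\rho/\coxn}\cong\pi_{\rho/\coxn}$, which is manifestly an $\fock$-submodule (hence a $\walg$-submodule), and one transports back to $M_\sigma$ via the $\walg$-equivariant isomorphism $\Psi$ of \eqref{eq:equivariance-all}.

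For (C) you correctly flag the difficulty and leave it open. The paper does not attempt either of your proposed routes (leading-term analysis in $S(\prh^-)$, or a Shapovalov/contravariant-form computation). Instead it proves irreducibility of $Z\cong\pi_{\rho/\coxn}$ as a $\walg$-module \emph{first}, by the Frenkel--Kac--Radul--Wang bridge to $\woneinf$ (Theorem~\ref{thm:fkrw-thm}, Propositions~\ref{prop:lift}--\ref{prop:primchar}): since $(\rho/\coxn)(\alpha^\vee)\notin\integers$ for all roots $\alpha$, the relevant $\woneinf$-module is primitive with character equal to that of $\pi_{\rho/\coxn}$, forcing irreducibility. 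A character comparison (Proposition~\ref{prop:propa}) then shows $Z$ is a Verma module, so (C) and (a) follow from the abstract PBW basis \eqref{eq:wpbw}. In short, the paper imports an external representation-theoretic fact to get (C) for free, whereas your proposal would need either a delicate combinatorial computation or an explicit determinant formula that is not carried out.
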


We note that the modes $\omd[i]_n(\sigma)$ in \eqref{eq:ffbasis-twisted} now refer to the action of $\omd[i] \in \walg$ on the principal realization of $\vacm$:
\begin{equation*} \label{eq:modesdef}
  Y_{{}_{\vacm}}(\omd[i], z) = \sum_{n \in \integers} \omd[i]_n(\sigma) \,z^{-n - d_i}.
\end{equation*}

Unlike the Feigin-Frenkel basis  \eqref{eq:ffbasis} which involves only the modes $k_j \leq -d_{p_j}$, here we have $k_j \leq -1$ for all $j$.
We may view the Feigin-Frenkel basis as the analog of \eqref{eq:ffbasis-twisted} for the homogeneous realization of $\vacm$, with $\liehf$-invariants replaced by $\liegf$-invariants (see Remark~\ref{rem:walg-ginv}).
As a corollary to Theorem~\ref{thm:main-thm-intro}, we obtain an explicit description of the subspaces of the Brylinski filtration:
\begin{corollary}\label{cor:bryl-subspaces} Let $n,d \geq 0$. 
 The subspace $F^d Z_n$ has a basis given by the vectors in \eqref{eq:ffbasis-twisted} satisfying $\sum_{i=1}^r d_{p_i} \leq d$ and $\sum_{i=1}^r k_i = -n$.
\end{corollary}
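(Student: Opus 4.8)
The plan is to obtain the description of $F^d Z_n$ from Theorem~\ref{thm:main-thm-intro}(c) and the generating function~\eqref{eq:hilbZ} by a dimension count. Write $\scrb$ for the set of vectors~\eqref{eq:ffbasis-twisted}; for $v=\omd[p_1]_{k_1}(\sigma)\,\omd[p_2]_{k_2}(\sigma)\cdots\omd[p_r]_{k_r}(\sigma)\,\hwvec\in\scrb$ set $w(v):=\sum_{i=1}^r d_{p_i}$ and $e(v):=-\sum_{i=1}^r k_i$. By Theorem~\ref{thm:main-thm-intro}(c), $\scrb$ is a Brylinski-compatible basis of $Z$, so it remains only to pin down, for each $v\in\scrb$, the smallest $d$ with $v\in F^d Z$ together with its energy degree.

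First I would establish two facts about a single $v\in\scrb$. (Energy.) Since $Y_{{}_{\vacm}}(\omd[i],z)=\sum_{n}\omd[i]_n(\sigma)\,z^{-n-d_i}$, the mode $\omd[i]_k(\sigma)$ raises the energy by $-k$ (the energy grading $Z=\bigoplus_n Z_n$ is the eigenspace grading of the principal scaling operator, normalised so that $\hwvec$ has energy $0$); hence $v\in Z_{e(v)}$. (Brylinski degree.) One has $v\in F^{w(v)}Z$, i.e.\ each mode $\omd[i]_k(\sigma)$ with $k\leq-1$ carries $F^j Z$ into $F^{j+d_i}Z$. This is exactly the assertion that the Brylinski filtration is induced by the PBW basis, and is what is proved when establishing Theorem~\ref{thm:main-thm-intro}(c): in the principal Fock realization $\omd[i]$ is a normally ordered differential polynomial of conformal weight $d_i$ in the principal Heisenberg current, so $\omd[i]_k(\sigma)$ is a sum of normally ordered products of at most $d_i$ creation operators and therefore raises the Brylinski degree by at most $d_i$.

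Next, fix $d,n\geq 0$ and set $S_{d,n}:=\{\,v\in\scrb:\ e(v)=n,\ w(v)\leq d\,\}$. By the two facts, $S_{d,n}$ is a linearly independent subset of $F^d Z_n$, so it is enough to check $|S_{d,n}|=\dim F^d Z_n$. For this I would note that $(p_i,k_i)_{i=1}^r\mapsto\{(d_{p_i},-k_i)\}_{i=1}^r$ is a bijection from the vectors of $\scrb$ with $w=a$, $e=n$ onto the multisets of pairs $(d_k,m)$ with $1\leq k\leq\ell$ and $m\geq1$ (for $\afatype[\ell]$ the degrees $d_1<d_2<\cdots<d_\ell$ are pairwise distinct) having total first coordinate $a$ and total second coordinate $n$; by the definition of~\eqref{eq:hilbZ} the number of such multisets is $\dim(F^a Z_n/F^{a-1}Z_n)$. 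Summing over $a\leq d$ gives $|S_{d,n}|=\dim F^d Z_n$, and a linearly independent subset of $F^d Z_n$ of that size is a basis — which is the claim.

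The only step here that is not formal is the containment $\omd[p_1]_{k_1}(\sigma)\cdots\omd[p_r]_{k_r}(\sigma)\,\hwvec\in F^{\sum_{i}d_{p_i}}Z$, the one point where the explicit form of the $\omd[i]$ on the principal realization is used; since this is already part of the proof of Theorem~\ref{thm:main-thm-intro}(c), I expect no real obstacle, and the corollary is essentially a repackaging of the main theorem.
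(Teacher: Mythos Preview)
Your argument is correct and is essentially the paper's own proof: the paper shows $v\in F^{w(v)}Z_{e(v)}$ via Lemma~\ref{lem:omega-modes-property} (which is Lemma~\ref{lem:fockmodes-bryl} applied to $\omd[p]\in\fock^{[d_p]}$), then matches $|\scrb_n^d|$ with the $t^dq^n$-coefficient of \eqref{eq:hilbZ} and invokes Lemma~\ref{lem:filtration-grbasis}, after which the corollary is stated to ``follow easily'' --- exactly your dimension count $|S_{d,n}|=\sum_{a\le d}\dim(F^aZ_n/F^{a-1}Z_n)=\dim F^dZ_n$. One small wording slip: $\omd[i]_k(\sigma)$ is a sum of products of at most $d_i$ Heisenberg \emph{modes} (creation \emph{and} annihilation), not just creation operators; this is what Lemma~\ref{lem:fockmodes-bryl} uses to get the bound, and your stated conclusion is unaffected.
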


\medskip
Verma modules of $\walg$-algebras are generically irreducible \cite{arakawa}. We show that the highest weight of the $\walg$-module $Z$ is a generic weight (see Proposition~\ref{prop:primchar}) indirectly, using results of Frenkel-Kac-Radul-Wang \cite{fkrw}  which relate representations of $\walg$-algebras in type $A$ to those of  $\widehat{\mathcal{D}}$, the universal central extension of the Lie algebra of regular differential operators on the circle. We however expect Theorem~\ref{thm:main-thm-intro} to hold for all simply-laced affine algebras.

\medskip
We remark that the representation of $\walg$ obtained by restricting the principal realization of $\vacm$  has been previously considered by Bakalov-Milanov \cite{bakalov-milanov} and other authors (see references in \cite{bakalov-milanov}) for its role in the study of singularities. Representations of $\walg$-algebras obtained by restricting more general twisted representations of affine Kac-Moody algebras have also been recently studied via heuristic character formulas in \cite{BGM-physics}. However, to the best of our knowledge, a rigorous proof of Theorem~\ref{thm:main-thm-intro}(b) does not appear in the literature. Further, the connection to Cherednik's $q$-Macdonald-Mehta constant term identity in type $A$ provided by Theorem~\ref{thm:main-thm-intro}(c) does not seem to have been noticed before.

\smallskip
The paper is organized as follows. Sections~\ref{sec:vas}-\ref{sec:twisted-mods} contain background and establish notation on lattice vertex algebras and their twisted modules. A reader acquainted with these notions can skip directly to section~\ref{sec:bryl-fil}, which describes the Brylinski filtration on $\vacm$ in the principal realization. Section~\ref{sec:walg} recalls the main facts about $\walg$-algebras and their Verma modules. Sections~\ref{sec:zmzeta}-\ref{sec:fkrw} culminate in proving that $Z$ is an irreducible Verma module of $\walg$. Finally, section~\ref{sec:bryl-comp} completes the proof of Theorem~\ref{thm:main-thm-intro} by showing that the natural PBW basis of this Verma module is in fact compatible with the Brylinski filtration.

\subsection{Acknowledgments} It is a pleasure to thank Bojko Bakalov for useful discussions and pointers to relevant literature at an early stage of this work. The authors also thank an anonymous referee whose comments helped improve the exposition. Sachin Sharma also thanks the Institute of Mathematical Sciences, where parts of this work were done, for its excellent hospitality.

\subsection{Funding:} Partial financial support was received from the Department of Science and Technology, Government of India through grants EMR/2016/001997 (SG) and MTR/2019/000071 (SV), from the Department of Atomic Energy, Government of India under a XII plan project (SV) and from the Indian Institute of Technology Kanpur through a faculty initiation grant (SS).

\section{Vertex algebras, representations}\label{sec:vas}
We first recall the primary notions about vertex algebras and fix notation \cite{kac-vafb,bakalov-milanov,bakalov-kac}.
\subsection{Vertex algebras}
A vertex algebra is a vector space $V$, with a vector  $\vac \in V$ (vacuum vector) and a map $Y: V \to \End V[[z,z^{-1}]]$ (state-field correspondence):
\[ Y(a,z) = \sum_{n \in \integers} a_{(n)} \, z^{-n-1} \]
satisfying the following axioms \cite[Prop 4.8(b)]{kac-vafb}: (a) $Y(a,z)\,b$ is a Laurent series in $z$ for all $a,b \in V$ (b) $Y(\vac,z) = \id_V$ (c) $a_{(-1)}\vac = a$ for all $a \in V$ and (d) the modes satisfy the Borcherds identity:
\begin{equation} \label{eq:borcherds-id} 
  \begin{split}
    \sum_{j=0}^\infty (-1)^j \binom{n}{j} \Big( a_{(m+n-j)} \left(b_{(k+j)} \,c\right) - (-1)^n\, b_{(k+n-j)} \left(a_{(m+j)} \,c\right) \Big)
    \\= \sum_{j=0}^\infty \binom{m}{j} \left( a_{(n+j)}\,b \right)_{(k+m-j)} \,c
\end{split}
  \end{equation}
for all $a,b,c \in V$ and $m,n,k \in \integers$.

\subsection{Strongly generating subset}
We recall that a vertex algebra $V$ is said to be strongly generated by a subset $X \subset V$ if $V$ is spanned by the vectors obtained by repeated actions of the negative modes of elements of $X$ on the vacuum $\vac$. In other words:

\[ V = \text{ span } \{ x^1_{(k_1)} \, x^2_{(k_2)} \,\cdots\, x^r_{(k_r)} \,\vac: \; r \geqslant 0, \,x^i \in X, \,k_i <0\}.\]

\subsection{The Heisenberg vertex algebra}\label{sec:heisva}
Let $\liehf$ be a finite-dimensional vector space with a symmetric nondegenerate bilinear form $\form{\cdot}{\cdot}$. 
We denote the affinization\footnote{The $t$ which occurs here is not to be confused with the indeterminate $t$ of \eqref{eq:lustanalog}. Since the two don't appear together in this paper, we permit ourselves this mild notational conflict so as to adhere to standard notations as much as possible.} of $\liehf$ by
$ \homheis = \liehf \otimes_{\complex} \cttinv  \oplus \complex K, $
where $K$ is the central element. Letting $ht^j$ denote the element $h \otimes t^j$, the Lie bracket is defined by $[h_1 t^j, h_2 t^k] = \delta_{j+k,0} \, \form{h_1}{h_2} \,K$, for $h_1,h_2 \in \liehf$ and $j,k \in \integers$. Thus $\homheis$ is isomorphic to a {\em Heisenberg Lie algebra}.
Let
\[ \fock = \Ind_{\,\hhplus \oplus \complex K}^{\homheis} \complex = U\homheis \otimes_{U(\hhplus \oplus \complex K)} \complex\]
be the highest weight irreducible representation of $\homheis$, where $\hhplus:=\liehf \otimes \complex[t]$ acts trivially on $\complex$ and $K$ acts as $1$. This is called the Fock space of $\homheis$. 
We may identify $\fock$ (as a vector space) with $\Sym \hhminus$, where $\hhminus := \liehf \otimes t^{-1}\complex[t^{-1}]$.
We have a grading:
$  \fock = \bigoplus_{p \geq 0} \fock^{[p]},$
where $ \fock^{[p]}$ is spanned by $\left(\liehf \otimes t^{j_1}\right) \cdots \left(\liehf \otimes t^{j_r}\right)$ with $j_i <0, \; \sum j_i = -p$.
The space $\fock$ forms a vertex algebra, called the Heisenberg vertex algebra. It is strongly generated by $\liehf \otimes t^{-1}$, with state-field correspondence determined by:
\begin{equation}\label{eq:heisva-statefield}
Y(ht^{-1},z) = \sum_{n \in \integers} (h t^n) \, z^{-n-1} \text{ for } h \in \liehf.
\end{equation}
This vertex algebra has a conformal vector \cite[Prop. 3.5]{kac-vafb}:
\begin{equation}\label{eq:confvec-heisva}
  \confvec = \frac{1}{2} \sum_{i=1}^{\dim \liehf} a^i_{(-1)} \, b^i_{(-1)} \,\vac,
\end{equation}
where  $\{a^i\}$ and $\{b^i\}$ are bases of $\liehf$ such that $\form{a^i}{b^{\,j}} = \delta_{ij}$ and the vacuum vector $\vac$ is the highest weight vector $1 \otimes 1$ of $\fock$. The central charge of the Virasoro field $Y(\omega,z) = \sum_{n \in \integers} \omega_n \,z^{-n-2}$ is $\dim \liehf$.

\subsection{Representations}
A representation of the vertex algebra $V$ (or $V$-module) is a vector space $M$ together with fields $Y_M(a,z) = \sum_{n \in \integers} a_{(n)} \, z^{-n-1}$ with $a_{(n)} \in \End M$ for each $a \in V, n \in \integers$. The axioms are (i) $Y_M(\vac,z) = \id_M$, (ii) $Y_M(a,z) \, m$ is a Laurent series in $z$ for each $a \in V, \,m \in M$, and (iii) the Borcherds identity \eqref{eq:borcherds-id} holds for all $a,b \in V$, $c \in M$ and all integers $m,n,k$.

\subsection{The oscillator representations of $\fock$} \label{sec:oscrep}
For $\lambda \in \liehf^*$, let $\complex_\lambda$ be the one-dimensional $(\hhplus \oplus \complex K)$-module on which $\liehf \otimes t\complex[t]$ acts trivially,  $h \otimes 1$ acts as $\lambda(h)$ for all $h \in \liehf$ and $K$ acts as $1$. Let 
\[ \pi_\lambda = \Ind_{\,\hhplus \oplus \complex K}^{\homheis} \complex_\lambda. \]
This is a highest weight irreducible representation of $\homheis$, called the oscillator representation. We let $\vac[\lambda]$ denote the highest weight vector $1 \otimes 1$ of $\pi_\lambda$. The space $\pi_\lambda$ is also a representation of the vertex algebra $\fock$, with the fields given by the same formula as in \eqref{eq:heisva-statefield}, but now with the $ht^n$ interpreted as operators on $\pi_\lambda$.
The conformal vector $\omega \in \fock$ induces a grading on $\pi_\lambda$; more precisely, we have $\pi_\lambda = \bigoplus_{k \in \complex} \, \pi_\lambda^{[k]}$, where $\pi_\lambda^{[k]} = \{x \in \pi_\lambda: \omega_0 (x) = kx\}$. For instance, one has $\omega_0 \vac[\lambda] = (|\lambda|^2/2) \vac[\lambda]$. The {\em character} of $\pi_\lambda$ is defined to be $\tr_{\pi_\lambda} q^{\omega_0}$ and is given by \cite[(3.4)]{kac-raina}:
\[ \tr_{\pi_\lambda}(q^{\omd[]_0}) = \frac{q^{|\lambda|^2/2}}{\varphi(q)^{\dim \liehf}}, \]
where $\varphi(q) = \prod_{n \geq 1} (1-q^n)^{-1}$ is Euler's $\varphi$-function.

\subsection{Twisted representations}

Let $\sigma$ be a finite order automorphism of a vertex algebra $V$, of order $d$ say. Then $\sigma$ is diagonalizable, with eigenvalues $\eta^j$, where $\eta = e^{\,2\pi i/d}$ and $j \in \integers$.
A $\sigma$-twisted representation of $V$ is a vector space $M$ together with fields $Y_M(a,z) \in \End M[[z^{1/d}, \, z^{-1/d}]]$. Given $a \in V$ such that $\sigma(a) = \eta^{-j} \,a$, we require:
\[ Y_M(a,z) = \sum_{n \in \frac{j}{d} + \integers} a_{(n)} \, z^{-n-1} \] with $a_{(n)} \in \End M$, i.e., the modes for $n \not\in \frac{j}{d} + \integers$ are zero. This is equivalent to $Y_M(\sigma a, z) = Y_M(a,e^{2\pi i}z)$ for all $a \in V$. 
The axioms are now \cite[Remark 3.1]{bakalov-kac}, \cite[\S 3.1]{bakalov-milanov}: (i) $Y_M(\vac,z) = \id_M$ (ii) $Y_M(a,z) \, m$ is a Laurent series in $z^{1/d}$ for each $a \in V, \, m \in M$ and (iii) the Borcherds identity \eqref{eq:borcherds-id} holds for all $a,b \in V$, $c \in M$ and for all $n \in \integers$, $m,k \in \frac{1}{d} \integers$.

\subsection{The twisted Heisenberg Fock space}
Let $\liehf$ be a finite-dimensional vector space with a symmetric nondegenerate bilinear form $\form{\cdot}{\cdot}$ and 
let $\varphi$ be a linear automorphism of $\liehf$ of order $m$, preserving $\form{\cdot}{\cdot}$. This induces an automorphism (also denoted $\varphi$) of the  vertex algebra $\fock$ of \S\ref{sec:heisva}.
One can extend $\varphi$ to an operator on the space $\liehf \otimes \complex[t^{1/m},t^{-1/m}] \oplus \complex K$ via:
\[
\varphi(h \otimes t^p) = \varphi(h) \otimes e^{2\pi i p} \,t^p, \;\;\;\; \varphi(K) = K \;\;\;\;\text{ for all } h \in \liehf, \; p \in \frac{1}{m} \integers.
\]
The $\varphi$-twisted Heisenberg algebra $\homheis[\varphi]$ is defined to be the set of fixed points of $\varphi$  \cite[\S 3.3]{bakalov-milanov}. It has a basis comprising the elements $K$ and $ht^p:=h \otimes t^p$, where $h \in \liehf, \, mp \in \integers$ such that $\varphi(h) = e^{-2\pi i p} h$ and the Lie bracket is:
$[a  t^p, b  t^q] = p \,\delta_{p,-q} \form{a}{b} K $
with $K$  central.
Let $\hhplus[\varphi]$ (respectively $\hhminus[\varphi]$) denote the span of the elements $h t^p$ of the above form for $p \geq 0$  (respectively $p<0$). As in \S\ref{sec:heisva}, we define the $\varphi$-twisted Fock space
\begin{equation} \label{eq:sigtwistfock}
  \fock[\varphi] = \Ind_{\,\hhplus[\varphi] \oplus \complex K}^{\,\homheis[\varphi]} \complex,
\end{equation}
where $\hhplus[\varphi]$ acts trivially on $\complex$ and $K$ acts as $1$.
Then $\fock[\varphi] \cong \Sym \hhminus[\varphi]$ (as vector spaces) becomes a $\varphi$-twisted representation of $\fock$ \cite[\S 3.3]{bakalov-milanov}. For $h \in \liehf$ with $\varphi(h) = e^{-2\pi i p} h$, we have
\begin{equation} \label{eq:twistatefield}
Y_{\fock[\varphi]}(ht^{-1},z) = \sum_{n \in p + \integers} (h t^n) \, z^{-n-1}.
\end{equation}

\subsection{Product identity}
For later use, we record the following ``product identity'' of Bakalov-Milanov \cite[Proposition 3.2]{bakalov-milanov} for twisted representations, rewritten in terms of modes:

\begin{proposition} (Bakalov-Milanov)
  Let $V$ be a vertex algebra, $\sigma$ an automorphism of $V$ of finite order $d$ and $M$ a $\sigma$-twisted $V$-module. Let $a, b \in V$ and $N_{ab}$ be a non-negative integer such that $a_{(k)}b =0$ for all $k \geq N_{ab}$. Fix $r \in \frac{1}{d}\integers$,  $n \in \integers$. Then for any $c \in M$, we have:
  \begin{equation}\label{eq:prodidentity}
  \left(a_{(n)}b\right)_{(r)} \, c = \displaystyle\sum_{\substack{p,q \in \frac{1}{d}\integers \\ p+q = r+n}} \kappa({p}) \; a_{(p)} b_{(q)} c,
    \end{equation}
    where $\kappa({p})$ is a scalar, given by $\kappa({p}) = \displaystyle\sum_{m=0}^N (-1)^{N-m} \binom{N}{m}\binom{m-p-1}{N-n-1}$ with  $N = \max(N_{ab}, n+1)$.
\end{proposition}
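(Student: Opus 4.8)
\smallskip
\noindent\textbf{Proof proposal.}
The identity is a purely formal consequence of the Borcherds identity \eqref{eq:borcherds-id} for the $\sigma$-twisted module $M$ (valid for $n \in \integers$, $m, k \in \frac1d\integers$), together with the truncation hypothesis $a_{(k)}b = 0$ for $k \geq N_{ab}$. The plan is to extract the modes of the iterate $a_{(n)}b$ from the associativity form of \eqref{eq:borcherds-id} and to recognise the coefficients that appear.

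First I would record the iterate form of \eqref{eq:borcherds-id}, obtained by setting $m = 0$ so that its right-hand side collapses to the single term $(a_{(n)}b)_{(k)}c$: for all $c \in M$,
\[
(a_{(n)}b)_{(r)}\,c \;=\; \sum_{j \geq 0}(-1)^j \binom{n}{j}\left(a_{(n-j)}\,b_{(r+j)}\,c \;-\;(-1)^n\,b_{(r+n-j)}\,a_{(j)}\,c\right),
\]
the sums being finite on a fixed $c$ because $Y_M(a,z)c$ and $Y_M(b,z)c$ are Laurent series. This writes $(a_{(n)}b)_{(r)}c$ in terms of two-step actions, but in the two orders $a_{(p)}b_{(q)}$ and $b_{(q)}a_{(p)}$, with $p+q = r+n$ in both.

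Next I would remove the wrong-order terms $b_{(r+n-j)}a_{(j)}c$. Setting $n = 0$ in \eqref{eq:borcherds-id} gives the commutator form, which reorders $b_{(q)}a_{(p)} = a_{(p)}b_{(q)} - \sum_{i \geq 0}\binom{p}{i}(a_{(i)}b)_{(p+q-i)}$; by $a_{(i)}b = 0$ for $i \geq N_{ab}$ the correction is a \emph{finite} sum of iterates $(a_{(i)}b)_{(s)}c$ with $0 \leq i \leq N_{ab}-1 \leq N-1$. Substituting these and solving the resulting finite, triangular system of relations for all the iterates at once --- most cleanly carried out by packaging them into the generating series $Y_M(Y(a,z_0)b, z_2)c$ and invoking the associativity property of twisted modules, which converts the bookkeeping into the expansion of a single rational expression in $z_0$ and $z_2$ --- one expresses $(a_{(n)}b)_{(r)}c$ purely through the $a_{(p)}b_{(q)}c$ with $p+q = r+n$. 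The alternating binomial form of the coefficient is precisely what that expansion produces: the factor $(-1)^{N-m}\binom{N}{m}$ comes from a denominator $(z_0+z_2)^N$, whose exponent must satisfy $N \geq N_{ab}$, while the inner binomial $\binom{m-p-1}{N-n-1}$ comes from the $z_0$-expansion of the accompanying numerator; thus
\[
\kappa(p) \;=\; \sum_{m=0}^{N}(-1)^{N-m}\binom{N}{m}\binom{m-p-1}{N-n-1},
\]
which one may also recognise as the $N$-th finite difference of $m \mapsto \binom{m-p-1}{N-n-1}$. The choice $N = \max(N_{ab},\,n+1)$ makes both ingredients legitimate: $N \geq N_{ab}$ guarantees $a_{(k)}b = 0$ for $k \geq N$, and $N \geq n+1$ keeps the lower index $N-n-1$ of the binomial coefficient non-negative.

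I expect the only genuine difficulty to be the formal-calculus bookkeeping, not anything conceptual: one must keep careful track of the formal variable in which each factor is expanded, so that the order-$N_{ab}$ pole of $Y(a,z_0)b$ at $z_0 = 0$ is handled consistently on both sides, and must verify that after dividing out $(z_0+z_2)^N$ the coefficient of $a_{(p)}b_{(q)}c$ simplifies to exactly $\kappa(p)$. The fractional exponents $z^{1/d}$ proper to $\sigma$-twisted modules do not affect this combinatorial identity --- $d$ enters only through the lattice $\frac1d\integers$ to which $p,q,r$ belong --- the one point where the twisting is used being the appeal to \eqref{eq:borcherds-id} in its twisted range of validity, which is given to us.
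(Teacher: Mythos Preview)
The paper does not give its own proof of this proposition: it merely records the identity and cites \cite[Proposition~3.2]{bakalov-milanov}. So there is nothing in the paper to compare against beyond the reference.

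Your outline is essentially the standard derivation in Bakalov--Milanov. The heart of it is exactly what you identify in your second paragraph: the associativity relation for $\sigma$-twisted modules, which says that for $N \geq N_{ab}$ the series $(z_0+z_2)^N\,Y_M(Y(a,z_0)b,\,z_2)\,c$ and $(z_0+z_2)^N\,Y_M(a,z_0+z_2)\,Y_M(b,z_2)\,c$ agree as elements of $M((z_2^{1/d}))((z_0))$. Extracting the coefficient of $z_0^{-n-1}z_2^{-r-1}$ from both sides, after dividing by $(z_0+z_2)^N$ (expanded in non-negative powers of $z_0$), yields precisely the formula with the stated $\kappa(p)$; the condition $N \geq n+1$ ensures the binomial $\binom{m-p-1}{N-n-1}$ has non-negative lower index, as you note. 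Your initial detour through the $m=0$ Borcherds identity plus commutator reordering is unnecessary and, as you yourself observe, less clean: the iterate $(a_{(n)}b)_{(r)}$ already sits on the left of the associativity relation, so there is no triangular system to solve. I would drop that first paragraph of the argument and go straight to the generating-series computation.
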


\section{The root lattice vertex algebra and its automorphisms}\label{sec:rlva}

\subsection{The lattice vertex algebra $\lattva[Q]$} \label{sec:rl-va}
We follow \cite{kac-vafb,bakalov-milanov,dong-nagatomo}. Let $\liegf$ denote a finite-dimensional simple Lie algebra over $\complex$. We further assume $\liegf$ is simply-laced, i.e., of types $A, D$ or $E$. Let $\roots$ denote the set of roots, $Q = \integers \roots$ denote the root lattice and $\liehf = \complex \otimes_\integers Q$ the Cartan subalgebra of $\liegf$. We assume $\form{\cdot}{\cdot}$ is the Killing form normalized such that $\form{\alpha}{\alpha} = 2$ for all roots $\alpha$.
As in \S\ref{sec:heisva}, we consider the Heisenberg Lie algebra
$ \homheis = \liehf[t,t^{-1}]  \oplus \complex K $
and let $\fock$ denote its Fock space. 

Let $\twistga$ denote the twisted group algebra of $Q$ with multiplication $e^\alpha e^\beta = \varepsilon(\alpha,\beta)\, e^{\alpha+\beta}$, where $\varepsilon: Q \times Q \to \{\pm 1\}$ is a bimultiplicative cocycle satisfying $\varepsilon(\alpha,\beta)\varepsilon(\beta,\alpha) = (-1)^{\form{\alpha}{\beta}}$.
The lattice vertex algebra $\lattva[Q]$ is defined to be the space
\[ \lattva[Q] = \fock \otimes_\complex \twistga. \]
This can be made into an $\homheis$-module by declaring each $1 \otimes e^\beta, \; \beta \in Q$ to be a highest weight vector, of highest weight $\beta$, i.e., $ht^k \, (1 \otimes e^\beta) =  \delta_{k,0} \, \aform{h}{\beta}\,(1 \otimes e^\beta)$ for $k \geqslant 0, \, h \in \liehf$. We let $h_n$ denote the operator $ht^n\otimes \mathrm{id}$ on $\fock \otimes_\complex \twistga$. We have the state-field correspondences:
\begin{align}
Y(ht^{-1} \otimes 1,z) &= \sum_{n \in \integers} h_n \, z^{-n-1}, \label{eq:heis-action-lattva}\\
Y(1 \otimes e^\alpha,z) &=  e^\alpha\,z^{\alpha_0}\,\exp \left( -\sum_{n <0} \frac{z^{-n}}{n} \,\alpha_n \right) \exp \left( -\sum_{n >0} \frac{z^{-n}}{n} \,\alpha_n \right), \label{eq:Q-action-lattva}
\end{align}
where  $h \in \liehf, \,\alpha \in Q$. Here, $e^\alpha$ denotes the left multiplication operator by $1 \otimes e^\alpha$, the operator $z^{\alpha_0}$ acts as $z^{\alpha_0} \,(\zeta \otimes e^\beta) = z^{\form{\alpha}{\beta}} \, (\zeta \otimes e^\beta)$ for $\zeta \in \fock$ and we identify $\alpha \in \liehf^*$ with its dual in $\liehf$ under the normalized Killing form. The vacuum vector is $\vac = 1 \otimes e^0$. The elements  $\{ht^{-1} \otimes 1: \,h \in \liehf\}$ and $\{1 \otimes e^\alpha: \,\alpha \in \roots\}$ strongly generate $\lattva[Q]$.
It is clear that $\fock$ is a vertex subalgebra of $\lattva$.

\subsection{Conformal vector, grading}

The conformal vector of $\lattva[Q]$ is given by:
\begin{equation}\label{eq:conflva}
  \confvec = \frac{1}{2} \sum_{i=1}^{\dim \liehf} a^i_{-1} \, b^i_{-1}\,\vac,
\end{equation}
where $\{a^i\}$ and $\{b^i\}$ are bases of $\liehf$ such that $\form{a^i}{b^{\,j}} = \delta_{ij}$.
Let the corresponding Virasoro field be $L(z) = Y(\confvec,z) = \sum_n L_n \,z^{-n-2}$ and define $\vqgrad := \{v \in \lattva[Q]: L_0 \,v = d\, v\}$.
This defines a vertex algebra grading $\lattva[Q] = \displaystyle\bigoplus_{d \in \integers_+} \,\vqgrad$, with $ht^{-n} \otimes 1 \in \vqgrad[n]$ and $1 \otimes e^\alpha \in \vqgrad[\form{\alpha}{\alpha}/2]$ for all $n \geqslant 0$ and $\alpha \in Q$. In particular, $\vqgrad[1]$ is spanned by the elements $ht^{-1} \otimes 1$ and $1 \otimes e^\alpha$ for $h \in \liehf, \alpha  \in \roots$. Hence $\vqgrad[1]$ strongly generates $\lattva[Q]$.

\subsection{Derivations and automorphisms}

We follow \cite{dong-nagatomo}. An automorphism of the lattice vertex algebra $\lattva[Q]$  is a vector space isomorphism $\varphi: \lattva[Q] \to \lattva[Q]$ that satisfies  $\varphi(\confvec) = \confvec$ and $\varphi ( a_{(n)}\,b ) = \varphi(a)_{(n)} \, \varphi(b)$ for all $a,b \in \lattva[Q], n \in \integers$.  A derivation $D: \lattva[Q] \to \lattva[Q]$ is a linear map that satisfies $D\confvec=0$ and $D ( a_{(n)}\,b ) = (Da)_{(n)} \, b + a_{(n)}\,Db$  for all $a,b \in \lattva[Q], n \in \integers$. If $D$ is a derivation, then $\exp D$ defines an automorphism of $\lattva[Q]$.
An automorphism fixes the vacuum vector $\vac$, while a derivation annihilates it. Further, derivations and automorphisms preserve the grading of $\lattva[Q]$, in particular they preserve the space $\vqgrad[1]$. Since this space strongly generates $\lattva[Q]$, a derivation or automorphism is uniquely determined by its action on $\vqgrad[1]$. 

\subsection{Inner automorphisms}\label{sec:innauts}
The space $\vqgrad[1]$ becomes a Lie algebra  under the bracket $[a,b] = a_{(0)}b$ for $a, b \in \vqgrad[1]$. It is isomorphic to $\liegf$, under an isomorphism mapping $(ht^{-1} \otimes 1)$ to $h$ and $(1 \otimes e^\alpha)$ to an element of the root space $\liegf_\alpha$ for each $h \in \liehf, \alpha \in \roots$. We identify $\vqgrad[1]$ with $\liegf$ via this isomorphism.

\smallskip
Given $X \in \vqgrad[1]$, the map $X_{(0)}$ is a derivation of $\lattva[Q]$. Restricted to $\vqgrad[1]$, this coincides with the adjoint action of $X$ on $\vqgrad[1]$ viewed as a Lie algebra. We let $\Inn \lattva[Q]$ denote the group of inner automorphisms of $\lattva$. This is the subgroup of $\Aut \lattva[Q]$ generated by $\{\exp X_{(0)}: X \in \vqgrad[1]\}$. Further, since $\vqgrad[1]$ strongly generates $\lattva[Q]$, it is clear that $\Inn \lattva[Q]$ is isomorphic to the group $\Inn \liegf = \left\langle \,\exp (\ad X): \,X \in \liegf \,\right\rangle$ of inner automorphisms of $\liegf$ \cite[\S 2]{dong-nagatomo}.

Given $\varphi \in \Inn \liegf$, say $\varphi = \exp (\ad X^1) \,\exp (\ad X^2) \ldots \exp (\ad X^p)$ with $X^i \in \liegf \cong \vqgrad[1] $, we denote by $\widetilde{\varphi} \in \Inn \lattva[Q]$ its unique lift to an automorphism of $\lattva[Q]$:
\begin{equation}\label{eq:lift_of_inner}
  \widetilde{\varphi} = \exp X_{(0)}^1 \,\exp X_{(0)}^2 \cdots \exp X_{(0)}^p.
\end{equation}
It is clear that if $\varphi$ has (finite) order $d$, then so does $\widetilde{\varphi}$.

\section{Twisted modules of the lattice vertex algebra as $\lieg$-modules}\label{sec:twisted-mods}

In this section, let $\lieg$ denote a simply-laced affine Lie algebra. Let $\liegf$ denote the underlying finite-dimensional simple Lie algebra with root lattice $Q$ and $V_Q$ the corresponding lattice vertex algebra.

\subsection{}\label{sec:twist1}
Let $m \geq 1$ and consider the Lie algebra $\liegf \otimes \complex[t^{1/m},t^{-1/m}] \oplus \complex K \oplus \complex d$. Here $K$ is central and the other Lie brackets are given by $[a \otimes t^p, b \otimes t^q] = [a,b]\otimes t^{p+q} + p \,\delta_{p,-q} \form{a}{b} K, \; [d, a\otimes t^p] = p (a \otimes t^p)$ for  $a, b \in \liegf$ and $p,q \in (1/m)\integers$.

Now let $\varphi$ be an inner automorphism of $\liegf$ of order $m$.  
Extend the action of $\varphi$ to the space $\liegf \otimes \complex[t^{1/m},t^{-1/m}] \oplus \complex K \oplus \complex d$ via:
\[
\varphi(x \otimes t^p) = \varphi(x) \otimes e^{2\pi i p} \,t^p, \;\;\;\; \varphi(K) = K, \;\;\;\;\varphi(d) = d\;\;\;\;\text{ for all } x \in \liegf, \; p \in \frac{1}{m} \integers.
\]
Define $\lieg[\varphi]$ to be the set of fixed points of $\varphi$. This is a Lie subalgebra and has a basis comprising the elements $K, d$ and $a \otimes t^p$, where $a \in \liegf, \, mp \in \integers$ such that $\varphi(a) = e^{-2\pi i p} a$. 
Since $\varphi$ is an inner automorphism of $\liegf$, the Lie algebra $\lieg[\varphi]$ is isomorphic to the affine Lie algebra $\lieg$ \cite[Theorem 8.5]{kac}.

\subsection{}\label{sec:twist2}
Let $\widetilde{\varphi}$ denote the unique lift of $\varphi$ to an automorphism of $\lattva[Q]$ as in \eqref{eq:lift_of_inner}.
Suppose $M$ is a $\widetilde{\varphi}$-twisted $\lattva[Q]$-module. Let $Y_M(a,z) = \sum_{p \in (1/m)\integers} a_{(p)} \, z^{-p-1}$ be the corresponding fields. Then $M$ becomes a $\lieg[\varphi]$-module via the action:
\[
a \otimes t^p \mapsto a_{(p)}, \;\; K \mapsto 1, \;\; d \mapsto -\omd[]_0,
\]
where $p \in \frac{1}{m}\integers,\, \varphi(a) = e^{-2\pi i p} a$ and $\omd[]$ is the conformal vector of $\lattva$ \cite{lepowsky-calctwistvo}.
    
\subsection{The automorphisms $\sigma, \zeta$ and $\psi$}\label{sec:twist3}
In what follows, we consider two specific automorphisms of $\liegf$. Let $\sigma$ denote a Coxeter element of $\liegf$. It is the element of $GL(\lieh)$ given by the product of all simple reflections (in some order). It has order $\coxn$, the Coxeter number of $\liegf$. Since $\sigma$ is a Weyl group element, we may lift it to an inner automorphism of $\liegf$, of order $\coxn$.  We also denote this automorphism as $\sigma$.
We have another finite order inner automorphism of $\liegf$, defined by:
\[\zeta = \exp \left(\ad \frac{2\pi i \rho^\vee}{\coxn}\right) = \Ad \left( \exp \frac{2\pi i \rho^\vee}{\coxn} \right), \]
where $\rho^\vee \in \liehf$ is the unique element such that $\alpha_i(\rho^\vee)=1$ for all simple roots $\alpha_i$ of $\liegf$.  It is well-known \cite[Prop. 3.4, Remark (e)]{kac-infdim-dedekind} that the automorphisms $\sigma$ and $\zeta$ are conjugate under an inner automorphism $\psi$ of $\liegf$, i.e., $\psi \sigma \psi^{-1} = \zeta$. 
Since they are all inner, $\sigma, \zeta, \psi$ lift to automorphisms $\widetilde{\sigma}, \widetilde{\zeta}, \widetilde{\psi}$ of $\lattva[Q]$ via \eqref{eq:lift_of_inner}. The automorphisms $\widetilde{\sigma}$ and $\widetilde{\zeta}$ also have order $\coxn$. Let $u = (h_1t^{j_1}) (h_2 t^{j_2}) \cdots (h_kt^{j_k}) \in \fock$ with $h_i \in \liehf$ and $j_i <0$ for $1 \leq i \leq k$ (see \S\ref{sec:heisva}).  The action of  $\widetilde{\sigma}$ and $\widetilde{\zeta}$  on $u \otimes e^\alpha \in \lattva$ is given by \cite{bakalov-milanov,dong-mason}:
\begin{align*}
  \widetilde{\sigma}(u \otimes e^\alpha) &=   (\sigma(h_1) t^{j_1}) \, (\sigma(h_2) t^{j_2}) \,\cdots \,(\sigma(h_k) t^{j_k}) \otimes e^{\sigma\alpha}, \\ \widetilde{\zeta}(u \otimes e^\alpha) &= e^{2\pi i \alpha(\rho^\vee)/\coxn} \, (u \otimes e^\alpha).
\end{align*}
Finally, we observe that since $\psi \sigma \psi^{-1} = \zeta$, the automorphisms  $\widetilde{\psi} \,\widetilde{\sigma} \,\widetilde{\psi}^{-1}$ and  $\widetilde{\zeta}$ agree on $\vqgrad[1]$. Since $\vqgrad[1]$ strongly generates $V_Q$, we obtain $\widetilde{\psi} \,\widetilde{\sigma} \,\widetilde{\psi}^{-1} = \widetilde{\zeta}$ on $V_Q$.

\subsection{The $\sigma$-twisted realization of the vacuum module}\label{sec:princ-voc}

There is a $\widetilde{\sigma}$-twisted representation $M_{\sigma}$ of the lattice vertex algebra $\lattva$ such that the associated representation of $\lieg \cong \lieg[\sigma]$ is isomorphic to the vacuum module $\vacm$. This is called the principal vertex operator realization \cite{Kac-Kazhdan-Lepowsky-Wilson}.
The linear automorphism $\sigma$ of $\liehf$ leaves the form on $\liehf$ invariant. We have
\[M_\sigma=\fock[\sigma],\]
the corresponding $\sigma$-twisted Fock space obtained by taking $\varphi = \sigma$ in \eqref{eq:sigtwistfock}. The action of $\fock \subset \lattva$ on $M_\sigma$  given by \eqref{eq:twistatefield} extends to an action of all of $\lattva$ \cite[(3.11)]{bakalov-milanov}.

\subsection{The $\zeta$-twisted realization of the vacuum module} \label{sec:zeta-voc}
Similarly, there is a $\widetilde{\zeta}$-twisted representation $M_{\zeta}$ of $\lattva[Q]$ which is isomorphic to $\vacm$ as a $\lieg \cong \lieg[\zeta]$-module. This is given by $M_\zeta = V_{Q + \rho/\coxn}$, i.e., $M_\zeta$ coincides with $V_Q = \fock \otimes_\complex \twistga$ as a vector space, with the action of $V_Q$ on $M_\zeta$ given by the $\rho/\coxn$-shifted versions of the vertex operators in equations~\eqref{eq:heis-action-lattva} and \eqref{eq:Q-action-lattva}. To define these, we let the operators $h_n, \, n \neq 0$ and $e^\alpha, \, \alpha \in Q$ act on $V_Q$ in the same manner as in \S\ref{sec:rl-va}, and redefine the actions of $h_0$ and $z^{\alpha_0}$ as follows:
\begin{align}
  h_0 (u \otimes e^\beta) &= (\beta + {\rho}/{\coxn}) (h) \, (u \otimes e^\beta) \label{eq:zetatwisted1}\\
  z^{\alpha_0} (u \otimes e^\beta) &= \form{\alpha}{\beta + \rho/\coxn} \, (u \otimes e^\beta) \label{eq:zetatwisted2}
  \end{align}
for all $h \in \liehf, \alpha, \beta \in Q, u \in \fock$.
  With these definitions, the vertex operators in \eqref{eq:heis-action-lattva} and \eqref{eq:Q-action-lattva} make $M_\zeta$ into a $\widetilde{\zeta}$-twisted $\lattva$-module \cite{lepowsky-calctwistvo,dong-mason}.\footnote{There is a  sign error in \cite[Theorem 3.4 (1)]{dong-mason}  (it should be $V_{L-\beta}$) which is corrected in part (2) of that theorem. Their convention on twisted modules and our (now standard) convention differ by a sign.} Viewed as a $\lieg \cong \lieg[\zeta]$-module, it is isomorphic to a level 1 irreducible highest weight representation \cite{lepowsky-calctwistvo}. That the highest weight is  $\Lambda_0$ can be readily seen from the definition of the shifted vertex operators and the isomorphism $\lieg \stackrel{\sim}{\to} \lieg[\zeta]$.

\subsection{Isomorphism of the two realizations of the vacuum module}
Now $M_\sigma \cong \vacm$ as a $\lieg[\sigma]$-module and $M_\zeta \cong \vacm$ as a $\lieg[\zeta]$-module. In other words $M_\sigma$ and $M_\zeta$  are isomorphic as $\lieg \cong \lieg[\sigma]  \cong \lieg[\zeta]$ modules. From the discussion of \S\ref{sec:twist1}-\S\ref{sec:twist3}, this implies that there is a map (cf., \cite[Theorem 4.5.2]{FLMbook} for the $\afsl[2]$-case):
$  {\Psi}: M_{{\sigma}} \to M_{{\zeta}} $
such that given $k \in \rationals$ and $v \in M_{{\sigma}}$
\begin{equation} \label{eq:equivariance}
  {\Psi}(X_{(k)} \bullet_{\sigma} v) = \left(\widetilde{\psi}(X)\right)_{(k)} \bullet_{\zeta} {\Psi}(v) \text{ for all } X \in \vqgrad[1].
\end{equation}
Here $\bullet_{\sigma}$ and $\bullet_{\zeta}$ indicate the $\lattva$-actions on $M_\sigma$ and $M_\zeta$ respectively.
Since $\vqgrad[1]$ strongly generates $\lattva[Q]$, equation~\eqref{eq:equivariance} holds for all $X \in \lattva[Q]$.  
\begin{equation} \label{eq:equivariance-all}
 {\Psi}(X_{(k)} \bullet_{\sigma} v) = \left(\widetilde{\psi}(X)\right)_{(k)} \bullet_{\zeta} {\Psi}(v) \text{ for all } X \in \lattva[Q].
\end{equation}
Equivalently:
\begin{equation} \label{eq:equivariance-all-fields}
  {\Psi} \; Y_{M_\sigma}(X; z) \; {\Psi}^{-1} = Y_{M_\zeta}(\widetilde{\psi}(X); z)   \text{ for all } X \in \lattva[Q]. 
\end{equation}

\section{The Brylinski filtration on $\vacm$}\label{sec:bryl-fil}
\subsection{Definition}\label{sec:bryl-def}
Let $\lieg$ denote a simply-laced affine Lie algebra, with standard Chevalley generators $e_i, f_i, \alphac_i \; (0 \leq i \leq \ell)$, canonical central element $K$ and derivation $d$. The {\em principal gradation} of $\lieg$ is defined by setting $\deg e_i = 1 = - \deg f_i$ and $\deg \alphac_i = \deg K = \deg d =0 $. The principal Heisenberg subalgebra $\prinheis$ of $\lieg$ is defined as:
\[
\prinheis = \{x \in \lieg: [x,e] \in \complex K\},
\]
where $e=\sum_{i=0}^\ell e_i$. It is isomorphic to an infinite-dimensional Heisenberg Lie algebra \cite[Lemma 14.4]{kac}. Since $e$ is a homogeneous element, it follows that $\prinheis$ is graded with respect to the principal gradation of $\lieg$:
$ \prinheis = \bigoplus_{j \in \integers} \prinheis_j $
with $\prinheis_0 = \complex K$. We set $\prinheis^+$ (respectively $\prinheis^-$) to be $\bigoplus_{j>0} \prinheis_j$ (respectively $\bigoplus_{j<0} \prinheis_j$). The multiset comprising the elements $j \neq 0$ each repeated $\dim \prinheis_j$ times is called the multiset of exponents of the affine Lie algebra $\lieg$ \cite[Chapter 14, Table E]{kac}.

The subspaces $F^i \vacm$ of the Brylinski filtration  \cite{slofstra} are defined (for $i \geq -1$) by:
\begin{equation*}
F^i \vacm = \{ v \in \vacm: x^{i+1} v=0 \text{ for all } x \in \prinheis^+\}.
\end{equation*}
Since $\prinheis^+$ is abelian, we have $v \in F^i \vacm$ iff 
\begin{equation} \label{eq:bryl-def}
   x_1x_2\cdots x_{i+1}v=0 \text{ for all tuples } (x_1, \cdots, x_{i+1}) \text{ of elements from }\prinheis^+.
\end{equation}

\subsection{In the principal vertex operator realization}\label{sec:prinvo-bryl}
It is easiest to describe the action of $\prinheis$ on $\vacm$ in the principal vertex operator realization $M_\sigma$ of \S~\ref{sec:princ-voc}. We  identify $\vacm$ with the space $M_\sigma$ via the isomorphism of $\S~\ref{sec:princ-voc}$. Then, the action of $\prinheis$ on $M_\sigma$ is given by the modes of the fields:
\[ Y_{M_\sigma}(ht^{-1}, z) = \sum_{j \in \frac{1}{\coxn} \integers} h_{(j)} (\sigma) \, z^{-j-1},\]
where $h \in \liehf$. 
In particular, the image of $\prinheis^+$ (respectively $\prinheis^-$) under the natural map $\lieg \to \End M_\sigma$ is the span of
$\{ h_{(j)}(\sigma): h \in \liehf, j >0 \}$ (respectively $\{ h_{(j)}(\sigma): h \in \liehf, j <0 \}$). We recall that the Coxeter element $\sigma$ acts on $\liehf\backslash\{0\}$ without fixed points, so $h_{(j)}(\sigma)=0$ for all $j \in \integers, h \in \liehf$.

We regard the Brylinski filtration as being defined on $M_\sigma$. Now $M_\sigma$ is a $\sigma$-twisted $V_Q$ module, and since $\fock$ is a $\sigma$-invariant vertex subalgebra of $V_Q$, we may view $M_\sigma$ as a $\sigma$-twisted $\fock$-module.
We recall from \S\ref{sec:heisva} that $\fock =\bigoplus_{d \geq 0} \fock^{[d]}$, with $\fock^{[1]} = \liehf \otimes t^{-1}$ identified with $\liehf$ . The following lemma concerns the action of $\fock$ on the subspaces $F^i M_\sigma$ of the Brylinski filtration on $M_\sigma$.

\begin{lemma}\label{lem:fockmodes-bryl}
With notation as above:
\be
\item Let $h \in \liehf, j \in \frac{1}{\coxn} \integers$ and $i \geq 0$.
  \be
\smallskip\item If $j>0$, then $h_{(j)}(\sigma)$ maps $F^i M_\sigma$ to $F^{i-1} M_\sigma$.
\smallskip  \item If $j<0$, then $h_{(j)}(\sigma)$ maps $F^i M_\sigma$ to  $F^{i+1} M_\sigma$.
\ee
\medskip\item Let $d \geq 0, j \in \frac{1}{\coxn} \integers$ and $i \geq 0$. If $X \in \fock^{[d]}$, then $X_{(j)}(\sigma)$ maps $F^i M_\sigma$ to $F^{i+d} M_\sigma$, where $X_{(j)}(\sigma)$ are the modes of the field $Y_{M_\sigma}(X,z)$.
\ee
\end{lemma}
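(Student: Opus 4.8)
The plan is to derive everything from the Heisenberg commutation relations in $\prinheis$ and the characterization \eqref{eq:bryl-def} of the filtration, using part (1) to bootstrap part (2) by an induction on $d$.

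For part (1)(a), note that when $j>0$ the operator $h_{(j)}(\sigma)$ acts as an element of $\prinheis^+$. So for $v\in F^iM_\sigma$ and any $x_1,\dots,x_i\in\prinheis^+$, the vector $x_1\cdots x_i\,h_{(j)}(\sigma)\,v$ is a product of $i+1$ elements of $\prinheis^+$ applied to $v$, hence vanishes by \eqref{eq:bryl-def}; thus $h_{(j)}(\sigma)v\in F^{i-1}M_\sigma$ (with the convention $F^{-1}M_\sigma=0$). For part (1)(b), $h_{(j)}(\sigma)$ acts as an element of $\prinheis^-$ when $j<0$. I would first record the elementary fact that if $w\in F^iM_\sigma$ and $y_1,\dots,y_m\in\prinheis^+$ with $m\ge i+1$, then $y_1\cdots y_m w=0$, since the innermost $i+1$ operators already annihilate $w$. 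Now fix $v\in F^iM_\sigma$ and $x_1,\dots,x_{i+2}\in\prinheis^+$. Since $\prinheis$ is a Heisenberg Lie algebra, each bracket $[x_k,h_{(j)}(\sigma)]\in\complex K$ acts as a scalar on $M_\sigma$, so commuting $h_{(j)}(\sigma)$ to the left through $x_1,\dots,x_{i+2}$ expresses $x_1\cdots x_{i+2}\,h_{(j)}(\sigma)\,v$ as $h_{(j)}(\sigma)\big(x_1\cdots x_{i+2}v\big)$ plus a linear combination of terms of the form (scalar)$\,\cdot\,x_1\cdots\widehat{x_k}\cdots x_{i+2}\,v$, each a product of $i+1$ elements of $\prinheis^+$ on $v$. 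By the preceding remark every summand vanishes, so $h_{(j)}(\sigma)v\in F^{i+1}M_\sigma$.

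For part (2) I induct on $d$. When $d=0$ we have $\fock^{[0]}=\complex\vac$ and $Y_{M_\sigma}(\vac,z)=\id_{M_\sigma}$, so $X_{(j)}(\sigma)$ is a scalar multiple of the identity for $j=-1$ and is $0$ otherwise, and the claim is immediate. For the inductive step, since $\fock$ is strongly generated by $\fock^{[1]}=\liehf\otimes t^{-1}$ and, under $\fock\cong\Sym\hhminus$, the mode $(ht^{-1})_{(-n)}$ with $n\ge1$ is just multiplication by $h\otimes t^{-n}$, I may assume by linearity that $X=a_{(-n)}X'$ with $a=ht^{-1}\in\fock^{[1]}$, $n\ge1$, and $X'\in\fock^{[d-n]}$. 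Applying the Bakalov--Milanov product identity \eqref{eq:prodidentity} in the $\widetilde{\sigma}$-twisted $\fock$-module $M_\sigma$ gives, for each $v\in M_\sigma$,
\[
X_{(r)}(\sigma)\,v=\sum_{\substack{p,q\in\frac{1}{\coxn}\integers\\ p+q=r-n}}\kappa(p)\;a_{(p)}(\sigma)\,(X')_{(q)}(\sigma)\,v,
\]
a locally finite sum, since $(X')_{(q)}(\sigma)v=0$ for $q\gg0$ and then $a_{(p)}(\sigma)$ annihilates any fixed vector for $p\gg0$. Take $v\in F^iM_\sigma$. By the inductive hypothesis $(X')_{(q)}(\sigma)v\in F^{i+d-n}M_\sigma$. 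The modes $a_{(p)}(\sigma)$ with $p\in\integers$ vanish (the Coxeter element $\sigma$ has no fixed vector on $\liehf$), while for $p\notin\integers$ part (1) shows $a_{(p)}(\sigma)$ maps $F^{i+d-n}M_\sigma$ into $F^{i+d-n+1}M_\sigma$ (if $p<0$) or into $F^{i+d-n-1}M_\sigma$ (if $p>0$); since $n\ge1$ and the filtration is increasing, both are contained in $F^{i+d}M_\sigma$. Hence $X_{(r)}(\sigma)v\in F^{i+d}M_\sigma$, completing the induction.

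I expect the only delicate point to be the reduction in part (2): one must check that $\fock$ is assembled from $\fock^{[1]}$ in exactly the form $X=a_{(-n)}X'$, and that the product identity \eqref{eq:prodidentity} genuinely applies with $a,X'\in\fock\subset V_Q$ (so that $M_\sigma$ is treated as a twisted module for the vertex subalgebra $\fock$, with $a_{(k)}X'=0$ for $k$ large as required). Once this is in place, the degree bookkeeping is automatic, because each use of part (1) shifts the filtration index by at most $1$ while the constraint $n\ge1$ supplies precisely the slack needed for the total shift by $d$.
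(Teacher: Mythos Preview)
Your proof is correct and follows essentially the same approach as the paper. The only cosmetic differences are organizational: for 1(b) the paper inducts on $i$ (showing $h'_{(p)}(\sigma)\,h_{(j)}(\sigma)v\in F^iM_\sigma$ for a single raising mode and then invoking \eqref{eq:bryl-def}), whereas you commute $h_{(j)}(\sigma)$ all the way past $x_1,\dots,x_{i+2}$ in one step; for part (2) the paper phrases the reduction as an ``iterated application'' of the product identity to expand $X_{(j)}(\sigma)$ directly into a linear combination of $k$-fold products of Heisenberg modes with $k\le d$, whereas you package the same iteration as an induction on $d$.
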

\begin{proof}
It is clear that 1(a) follows directly from the definition \eqref{eq:bryl-def}. For 1(b), we proceed by induction on $i$, starting with $i=-1$ (where it holds trivially). Let $h' \in \liehf, p \in \frac{1}{\coxn} \integers$ with $p >0$ and $v \in F^i M_\sigma$, $i \geq 0$. We have \cite[\S 3.3]{bakalov-milanov}:
\begin{equation}\label{eq:hhprime}
h^{\prime}_{(p)}(\sigma) \,h_{(j)}(\sigma) v =  h_{(j)}(\sigma) h'_{(p)}(\sigma) v + p\,\delta_{p,-j}\form{h'}{h} v,
\end{equation}
where $\form{\cdot}{\cdot}$ is the standard invariant form on $\liehf$. By 1(a), $h'_{(p)}(\sigma)v \in F^{i-1}M_\sigma$. The induction hypothesis implies that
$h_{(j)} (\sigma) h'_{(p)}(\sigma) v \in F^i M_\sigma$. Thus the right hand side of \eqref{eq:hhprime} is in $F^i M_\sigma$. Equation~\eqref{eq:bryl-def} completes the proof.

For (2), we recall from \S\ref{sec:heisva}  that in the vertex algebra $\fock$, each $X \in \fock^{[d]}$ is a linear combination of  terms of the form
$\left(h^1 \otimes t^{j_1}\right) \cdots \left(h^k \otimes t^{j_k}\right) = h^1_{(j_1)} \cdots h^k_{(j_k)} \vac$ with $h^i \in \liehf$, $j_i <0, \, \sum j_i = -d$ and where $\vac$ denotes the vacuum vector of $\fock$. In particular, this implies $k \leq d$. An iterated application of the product identity \eqref{eq:prodidentity} implies that each mode $X_{(j)}(\sigma)$ is a linear combination of terms of the form:
\[
h^{1}_{(p_1)}(\sigma)\, h^{2}_{(p_2)}(\sigma)\, \cdots h^{k}_{(p_k)}(\sigma) \in \End M_\sigma
\]
with $p_i \in \frac{1}{\coxn} \integers$. The assertion now follows from part (1) and the observation that $k \leq d$.
\end{proof}

\section{The $\walg$-algebra}\label{sec:walg}

We freely use the notations of the preceding sections.
\subsection{Definition}
Let $\lieg$ be a simply-laced affine Lie algebra. The  $\walg$-algebra associated to $\lieg$ is the vertex subalgebra of the lattice vertex algebra $\lattva[Q]$ defined by:
\begin{equation}\label{eq:walgdef}
  \walg := \bigcap_{X \in \vqgrad[1]} \ker_{{}_{\lattva[Q]}} \, X_{(0)}.
\end{equation}
The conformal vector $\confvec$ of $\walg$ coincides with that of $\lattva$ and is given by \eqref{eq:conflva} \cite[\S 2.4]{bakalov-milanov}. It induces a $\integers_+$-grading on $\walg$. 

\medskip
Let $d_1 \leqslant d_2 \leqslant \cdots \leqslant d_\ell$ be the list of degrees (exponents plus one) of $\liegf$.
We recall from the introduction (Theorem~\ref{thm:ff}) that there exist elements $\omd[i] \in \walg$ of degree $d_i$ such that $\walg$ is freely generated by  $\omd[1],\, \omd[2],\, \cdots,\, \omd[\ell]$. Further, $d_1=2$ and the conformal vector $\confvec$ is precisely $\omd[1]$.

\begin{remark}\label{rem:walg-ginv}
We recall from \S\ref{sec:innauts} that  $\vqgrad[1] \cong \liegf$ as Lie algebras, with $X_{(0)}$ corresponding to $\ad X$. We can identify $\lattva$ with the homogeneous vertex operator realization of $\vacm$ \cite{frenkel-kac}. It follows from \eqref{eq:walgdef} that  $\walg$ maps to the subspace $\vacm^{\liegf}$ of $\liegf$-invariants under this identification (cf. \cite[Remark 2.1]{bakalov-milanov}).
\end{remark}

\subsection{Twisted $\lattva$-modules restrict to ordinary $\walg$-modules}

Since $\bigcap_{\,h \in \liehf} \,\ker_{{}_{\lattva[Q]}}  h_{(0)}$ is the Heisenberg vertex algebra $\fock \subset \lattva$, it follows that $\walg$ is a vertex subalgebra of $\fock$.
In addition, the $\walg$-algebra is pointwise fixed by any inner automorphism of $\lattva[Q]$. This is because an inner automorphism (\S \ref{sec:innauts}) is a product of automorphisms of the form $\exp X_{(0)}$ for $X \in \liegf = \vqgrad[1]$. Since each $X_{(0)}$ annihilates $\walg$, we have $\exp X_{(0)}$ acts as the identity operator on $\walg$. 

 In particular, given a $\varphi$-twisted representation $M$ of $\lattva[Q]$, where $\varphi$ is a finite order inner automorphism of $\lattva[Q]$, its restriction to $\walg$ defines an ordinary (untwisted) representation of $\walg$ on $M$.
 Now applying this to equation \eqref{eq:equivariance-all}, we conclude $\widetilde{\psi}(X) = X$ for $X \in \walg$, and hence that:
  \[ {\Psi}(X_{(k)} \bullet_{\sigma} v) = X_{(k)} \bullet_{\zeta} {\Psi}(v) \]
  for $X \in \walg,\, k \in \integers$ and $v \in M_{{\sigma}}$. We have thus proved:

\begin{proposition}
  The map $\Psi: M_{{\sigma}} \to M_{{\zeta}}$ is an isomorphism of  $\walg$-modules.
\end{proposition}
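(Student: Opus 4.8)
The plan is to read this off directly from the equivariance property \eqref{eq:equivariance-all} established for the map $\Psi$, combined with the fact — proved just above — that $\walg$ is pointwise fixed by every inner automorphism of $\lattva[Q]$. Recall first that $\Psi$ is by construction a linear isomorphism of the underlying vector spaces of $M_\sigma$ and $M_\zeta$, so the only thing left to check is that it intertwines the two $\walg$-actions.

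Since $\psi$ is an inner automorphism of $\liegf$, its lift $\widetilde\psi$ is an inner automorphism of $\lattva[Q]$ in the sense of \S\ref{sec:innauts}, and hence acts as the identity on the vertex subalgebra $\walg$. Substituting $\widetilde\psi(X) = X$ for $X \in \walg$ into \eqref{eq:equivariance-all} gives
\[
\Psi\!\left(X_{(k)} \bullet_\sigma v\right) = X_{(k)} \bullet_\zeta \Psi(v)
\]
for all admissible $k$ and all $v \in M_\sigma$, which is exactly the intertwining condition. Equivalently one may pass to \eqref{eq:equivariance-all-fields} and note that $Y_{M_\sigma}(X;z)$ and $Y_{M_\zeta}(X;z)$ are conjugate under $\Psi$ for every $X \in \walg$.

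The one bookkeeping point to be careful about is the index set of the modes. A priori \eqref{eq:equivariance-all} only asserts the identity for $k$ ranging over the coset of $\integers$ determined by the $\widetilde\sigma$- (respectively $\widetilde\zeta$-) eigenvalue of $X$; but $X \in \walg$ is fixed by both $\widetilde\sigma$ and $\widetilde\zeta$, so that coset is $\integers$ in both cases. This is consistent with the observation above that the restrictions of the twisted $\lattva[Q]$-module structures on $M_\sigma$ and $M_\zeta$ to $\walg$ are ordinary (untwisted) modules, whose modes are integer-indexed. With this noted, $\Psi$ is a $\walg$-module homomorphism, and being already a vector space isomorphism, it is an isomorphism of $\walg$-modules. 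I do not expect a genuine obstacle here: the proposition is a formal consequence of the setup in \S\ref{sec:twisted-mods}--\S\ref{sec:walg}, with the only subtlety being the elementary check on mode indices just described.
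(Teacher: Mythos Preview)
Your proof is correct and follows essentially the same route as the paper: use that $\widetilde\psi$ is inner and hence fixes $\walg$ pointwise, then substitute into \eqref{eq:equivariance-all} to obtain the intertwining relation. Your additional paragraph justifying why the mode index $k$ runs over $\integers$ on both sides is a clarification the paper leaves implicit.
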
  

\subsection{The Zhu algebra of $\walg$}
We recall the definition of the Zhu algebra $\zhu$ \cite{zhu}. Given $a \in \wgrad[d]$, we let $\deg a:=d$ and write $Y(a,z) = \sum_n a_n\, z^{-n-d}$. Consider the subspace $O(\walg)$ spanned by the elements of the form $\sum_{p=0}^{\deg a} \binom{\deg a}{p} a_{-1-p}\,b$ for  homogeneous elements $a, b$ of $\walg$. The Zhu algebra is the quotient $\zhu = \walg/O(\walg)$ equipped with the associative multiplication:
\[
a * b = \sum_{p=0}^{\deg a} \binom{\deg a}{p} a_{-p}\,b \; \pmod{O(\walg)}.
\]
Let $M = \bigoplus_{c \in \complex} M_c$ be a graded $\walg$-module and let $\mtop$ denote the sum of the nonzero homogeneous subspaces $M_c$ for which $M_{c+n} =0$ for all $n >0$ \cite[\S 3.12]{arakawa}. Then $\mtop$ is a $\zhu$-module with the action:
\[ a \cdot m = a_0 m, \]
where $m \in \mtop$ and $a$ is the image in $\zhu$ of a homogeneous element of $\walg$.

It is well-known that $\zhu$ is isomorphic to $\centug$, the center of the universal enveloping algebra of $\liegf$ \cite{arakawa, frenkelkacwaki}. We identify $\zhu$ with $\centug$ using the isomorphism of Arakawa \cite[Theorem 4.16.3 (ii)]{arakawa}. With this identification, the one-dimensional representations of $\zhu$ are given by the central characters:
\[ \cch : \centug \to \complex \]
for $\lambda \in \liehf^*$; each $z \in \centug$ acts as the scalar $\cch(z)$ on the Verma module $M(\lambda)$ of $\liegf$ with highest weight $\lambda$.

\subsection{Verma modules of $\walg$, character, PBW basis}\label{sec:wvermazhu}

We point the reader to Arakawa \cite[\S 5.1]{arakawa} for the full definition of Verma modules of $\walg$-algebras. Here, we will content ourselves with recalling their essential properties.

Given a central character $\cch$ of $U\liegf$, the Verma module $\vermaw(\cch)$ is a graded $\walg$-module, with $\mtop[\vermaw(\cch)] = \complex \vac[\cch]$, where $\vac[\cch]$ is a cyclic vector of $\vermaw(\cch)$. The Zhu algebra $\zhu$ acts on $\vac[\cch]$ by
\[ z \vac[\cch] = \cch(z) \, \vac[\cch] \; \text{ for all } z \in \zhu.\]

Further, given a graded $\walg$-module $M$ and a nonzero vector $m \in \mtop$ such that $z m = \cch(z) m$ for all $z \in \zhu$, there exists a unique $\walg$-homomorphism $\psi: \vermaw(\cch) \to M$ sending $\vac[\cch] \mapsto m$.\footnote{Our $\walg$ corresponds to $\walg_1$ in the notation of \cite{fkrw}. In Arakawa's notation \cite{arakawa}, this corresponds to $k+\coxdual =1$, where $\coxdual$ is the dual coxeter number. In this case, the eigenvalue of $\omd[]_0$ (where $\omd[] = \omd[1]$ is the conformal vector of $\walg$) on $\vac[\gamma_\mu]$ is  $\Delta_{\mu} := |\mu+\overline{\rho}|^2/2$ as per \cite[(286)]{arakawa}. This is the top degree of $\vermaw(\cch)$.}
If $M$ is a $\walg$-module on which $\omd[]_0$ acts semisimply with finite-dimensional eigenspaces, we define its character by $\ch M = \tr_M(q^{\omd[]_0})$.
Our definition  differs from that of \cite[\S 5.6]{arakawa} by a normalization factor.\footnote{
Arakawa \cite{arakawa} defines it as :  $q^{-c(k)/24} \tr_M(q^{\omd[]_0})$. For $k+\coxdual =1$, $c(k) = \ell$.}
Correcting for this, the character of a Verma module is given by \cite[Proposition 5.6.6]{arakawa}:
\begin{equation}\label{eq:vermachar}
  \ch \vermaw(\cch) = \frac{q^{|\lambda+\rho|^2/2}}{\varphi(q)^\ell},
\end{equation}
where $\rho$ is the Weyl vector of $\liegf$ and $\varphi = \prod_{n \geq 1} (1-q^n)$ is Euler's function. 

  Further, the Verma module $\vermaw(\cch)$ has a PBW type basis. More precisely,
  as established by Arakawa \cite[Prop 5.1.1]{arakawa}, there exist filtrations $F^\bullet \walg$ on $\walg$ and $F^\bullet \vermaw(\cch)$  on the Verma module such that $\gr^F \vermaw(\cch)$ is isomorphic to the polynomial algebra $\complex[\omd[p]_{k}: \, 1 \leq p \leq \ell, \, k \leq -1]$ as a $\gr^F \walg$-module.
In particular, consider the following elements of $\vermaw(\cch)$:
\beq\label{eq:wpbw}
\omd[p_1]_{k_1}\; \omd[p_2]_{k_2}\; \cdots\; \omd[p_r]_{k_r} \,\vac[\cch],
\eeq
\smallskip
where (i) $r \geqslant 0$\quad(ii) $\ell \geqslant p_1 \geqslant p_2 \geqslant \cdots \geqslant p_r \geqslant 1$ \quad(iii) $k_j \leqslant -1$ for all $j$\quad   and (iv) if $p_i = p_{i+1}\,$, then $k_i \leqslant k_{i+1}$.
Proposition 5.1.1 of \cite{arakawa} implies that
\[ \{\gr^F v: v \text{ is of the form } \eqref{eq:wpbw} \} \]
is a basis of $\gr^F \vermaw(\cch)$. This in turn implies that the vectors of the form \eqref{eq:wpbw} form a basis of $\vermaw(\cch)$.

\section{The space $Z$ and $M_\zeta$}\label{sec:zmzeta}

Consider the subspace $Z:=\vacm^{\liehf}$ of $\liehf$-invariants of $\vacm$:
\[ Z= \bigoplus_{n \geqslant 0} Z_n = \bigoplus_{n \geqslant 0} L(\Lambda_0)_{\Lambda_0 - n \delta}. \]
Now consider the $\zeta$-twisted module $M_\zeta$ of $\lattva[Q]$. Under the $\lieg$-module isomorphism $\vacm \to M_{\zeta}$ of \S\ref{sec:zeta-voc}, the space $Z$ maps to $\fock \otimes e(\rho/\coxn)$.
 Since $\widetilde{\zeta}$ clearly fixes every element of the Heisenberg vertex subalgebra $\fock$ of $\lattva[Q]$, the restriction of $M_\zeta$ to $\fock$ is an ordinary (untwisted) representation of $\fock$. It is clear from \eqref{eq:zetatwisted1} that the subspace $Z = \fock \otimes e(\rho/\coxn) \subset M_\zeta$ is $\fock$-invariant.
For $u \in \fock^{[p]}$, let the corresponding field be denoted (renumbered with conformal weight)
$Y_{M_\zeta}(u,z) = \sum_{n \in \integers} u_n \,z^{-n-p}$. We then have
\begin{equation} \label{eq:modejumps}
u_{-k} (Z_m) \subset Z_{m+k} \text{ for all } k, m \in \integers.
\end{equation}
Viewed as a representation of the Heisenberg Lie algebra
$ \liehf \otimes \complex[z,z^{-1}] \oplus \complex c,$
the space $Z$ is isomorphic to the irreducible highest weight representation with highest weight $\rho/\coxn$.

\section{$Z$ is an irreducible $\walg$-module}\label{sec:fkrw}

\subsection{}

In the remainder of the paper, we consider $\lieg = A_\ell^{(1)}$. Let $\walg$ be the $\walg$-algebra of $\lieg$; it is a vertex subalgebra of $\fock$. Let $\lambda \in \liehf^*$ and let $\pi_\lambda$ denote the corresponding irreducible representation of $\fock$, with a highest weight vector $\vac[\lambda]$ (\S\ref{sec:oscrep}). We restrict $\pi_\lambda$ to a representation of $\walg$.

\begin{proposition}\label{prop:propa}
Let $\lambda \in \liehf^*$ and consider $\pi_\lambda$ as a $\walg$-module as above. If $\pi_\lambda$ is irreducible (as a $\walg$-module), then it is isomorphic to a Verma module of $\walg$.
\end{proposition}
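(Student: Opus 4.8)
The plan is to produce a nonzero $\walg$-module homomorphism from a Verma module $\vermaw(\cch)$ onto $\pi_\lambda$ and then argue it is an isomorphism using the irreducibility hypothesis together with a character comparison. First I would locate the ``top'' of $\pi_\lambda$ as a $\walg$-module. Recall that $\omd[]_0$ (where $\omd[] = \omd[1]$ is the conformal vector, shared by $\walg$, $\fock$ and $\lattva$) acts on $\pi_\lambda$ via the grading $\pi_\lambda = \bigoplus_{k} \pi_\lambda^{[k]}$ of \S\ref{sec:oscrep}, with $\omd[]_0 \vac[\lambda] = (|\lambda|^2/2)\,\vac[\lambda]$ and all other graded pieces of the form $\pi_\lambda^{[|\lambda|^2/2 + n]}$ for $n \geq 1$ (since the Heisenberg negative modes strictly raise the $\omd[]_0$-eigenvalue). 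Hence the minimal $\omd[]_0$-eigenspace is exactly $\complex\vac[\lambda]$, so $(\pi_\lambda)_{\mathrm{top}} = \complex\vac[\lambda]$. The Zhu algebra $\zhu \cong \centug$ acts on this line by a one-dimensional character, which (under Arakawa's identification) must be $\cch[\mu]$ for some $\mu \in \liehf^*$; call this central character $\cch$. Identifying the exact $\mu$ is not needed for the argument, only that such a $\mu$ exists.

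The second step invokes the universal property of Verma modules of $\walg$ recalled in \S\ref{sec:wvermazhu}: since $\vac[\lambda] \in (\pi_\lambda)_{\mathrm{top}}$ is a nonzero vector on which $\zhu$ acts by the character $\cch$, there is a unique $\walg$-homomorphism $\psi: \vermaw(\cch) \to \pi_\lambda$ sending $\vac[\cch] \mapsto \vac[\lambda]$. This map is nonzero, hence (as $\vermaw(\cch)$ has a unique maximal submodule, its image being a submodule of $\pi_\lambda$ containing $\vac[\lambda]$) has image equal to the $\walg$-submodule of $\pi_\lambda$ generated by $\vac[\lambda]$. I would then use the irreducibility hypothesis: this submodule is a nonzero submodule of the irreducible module $\pi_\lambda$, so it is all of $\pi_\lambda$; thus $\psi$ is surjective.

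Finally I would upgrade surjectivity to bijectivity via a character count. By \eqref{eq:vermachar}, $\ch \vermaw(\cch) = q^{|\lambda|^2/2}/\varphi(q)^\ell$ — here I use that the top degree of $\vermaw(\cch)$ equals the $\omd[]_0$-eigenvalue on $\vac[\cch]$, which matches the eigenvalue $|\lambda|^2/2$ on $\vac[\lambda]$ since $\psi$ is $\omd[]_0$-equivariant, and that in our normalization ($k + \coxdual = 1$) the Verma character denominator is $\varphi(q)^\ell$. On the other hand the character of $\pi_\lambda$, computed in \S\ref{sec:oscrep} via Kac--Raina, is $\tr_{\pi_\lambda}(q^{\omd[]_0}) = q^{|\lambda|^2/2}/\varphi(q)^{\dim\liehf}$, and $\dim\liehf = \ell$ for $\lieg = A_\ell^{(1)}$. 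So the two graded characters coincide. Since $\psi$ is a surjection of graded vector spaces whose source and target have equal (finite-dimensional, by the grading) graded components, $\psi$ is a bijection in each degree, hence an isomorphism of $\walg$-modules. Therefore $\pi_\lambda \cong \vermaw(\cch)$, as claimed.

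The step I expect to be the main obstacle is the first one: pinning down that $(\pi_\lambda)_{\mathrm{top}}$ is genuinely the one-dimensional line $\complex\vac[\lambda]$ in the sense required by the Zhu-algebra/Verma machinery of \cite{arakawa} — i.e., that the relevant grading operator is $\omd[]_0$ restricted from $\fock$ (not some shifted copy), that its spectrum on $\pi_\lambda$ is bounded below with the minimum attained exactly on $\vac[\lambda]$, and that the action of $\zhu$ on that line is through a legitimate central character $\cch$ of $\centug$ so that the universal property applies. Once this bookkeeping is in place, surjectivity is immediate from irreducibility and injectivity is a one-line character comparison.
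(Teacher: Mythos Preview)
Your proposal is correct and follows essentially the same approach as the paper: identify $(\pi_\lambda)_{\mathrm{top}} = \complex\vac[\lambda]$, invoke the universal property of the Verma module to get a map $\vermaw(\cch) \to \pi_\lambda$, use irreducibility for surjectivity, and conclude by matching the characters $q^{|\lambda|^2/2}/\varphi(q)^\ell$. The only cosmetic difference is that the paper first writes $\ch \vermaw(\gamma_\mu) = q^{|\mu+\rho|^2/2}/\varphi(q)^\ell$ and then deduces $|\mu+\rho|^2 = |\lambda|^2$ from the equality of top $\omd[]_0$-eigenvalues, whereas you fold that step directly into the character formula via $\omd[]_0$-equivariance of $\psi$.
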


\begin{proof}
  Firstly, $\pi_\lambda$ is a graded $\walg$-module with $\mtop[(\pi_\lambda)] = \complex \vac[\lambda]$. By \S\ref{sec:wvermazhu}, this space is a one-dimensional $\zhu$-module, and is thus given by a central character of $U\liegf$. In other words, there exists $\mu \in \liehf^*$ such that $z \vac[\lambda] = \gamma_\mu(z) \vac[\lambda]$ for all $z \in \zhu \cong \centug$. Since $\vac[\lambda]$ is of top degree, we also have $\omd[p]_n \vac[\lambda] =0$ for all $1 \leq p \leq \ell$ and $n >0$.

Again by \S\ref{sec:wvermazhu}, there is a homomorphism of $\walg$-modules:
  \[ \phi: \vermaw(\gamma_\mu) \to \pi_\lambda \text{ sending } \vermahigh \to \vac[\lambda]. \]
  The hypothesis that $\pi_\lambda$ is irreducible implies that $\phi$ is surjective. We now compare the characters of these two modules. By \eqref{eq:vermachar}, we have
  $\ch \vermaw(\gamma_\mu)  = \frac{q^{|\mu+\rho|^2/2}}{\varphi(q)^\ell}$. Let $\ch \pi_\lambda =  \tr_{\pi_\lambda}(q^{\omd[]_0})$. Now $\walg$ is a vertex subalgebra of $\fock$ and their conformal vectors coincide (and are given by \eqref{eq:conflva}).  We recall from \S\ref{sec:oscrep} that the character 
\[ \tr_{\pi_\lambda}(q^{\omd[]_0}) = \frac{q^{|\lambda|^2/2}}{\varphi(q)^\ell}. \]
Since $\omd[]_0 \vermahigh = \frac{|\mu+\rho|^2}{2} \vermahigh$ and $\omd[]_0 \vac[\lambda] = \frac{|\lambda|^2}{2} \vac[\lambda]$, we conclude
$|\mu+\rho|^2 = |\lambda|^2$ and hence that $\ch \vermaw(\gamma_\mu) = \ch \pi_\lambda$. This proves that $\phi$ is an isomorphism.
\end{proof}

We have the following key theorem.

\begin{theorem}\label{thm:fkrw-thm}
  Let $\lambda \in \liehf^*$ be such that ${\lambda}({\alpha^\vee}) \not\in \integers$ for all roots $\alpha$ of $\liegf$. Then $\pi_\lambda$ is an irreducible $\walg$-module.
\end{theorem}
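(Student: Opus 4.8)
The plan is to deduce the irreducibility of $\pi_\lambda$ over $\walg = \walg(\mathfrak{sl}_{\ell+1})$ from the analogous statement for the $\walg$-algebra $\walg(\mathfrak{gl}_{\ell+1})$, which by Frenkel--Kac--Radul--Wang \cite{fkrw} coincides with the simple quotient of $\mathcal{W}_{1+\infty}$ at central charge $N := \ell+1$. Let $\liehf_{\mathfrak{gl}}$ denote the Cartan subalgebra of $\mathfrak{gl}_{\ell+1}$, so that $\liehf_{\mathfrak{gl}} = \complex I \oplus \liehf$ with $\liehf$ (the Cartan of $\mathfrak{sl}_{\ell+1}$) the traceless part. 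On the vertex-algebra side this gives a factorization $\fock[\mathfrak{gl}] \cong \mathcal{H} \otimes \fock$ of the rank-$(\ell+1)$ Heisenberg vertex algebra, where $\mathcal{H}$ is the rank-one Heisenberg vertex algebra of the trace current and $\fock$ is exactly the Heisenberg vertex algebra of this section. Via the quantum Miura embedding $\walg(\mathfrak{gl}_{\ell+1}) \hookrightarrow \fock[\mathfrak{gl}]$, one has the compatible decomposition $\walg(\mathfrak{gl}_{\ell+1}) \cong \mathcal{H} \otimes \walg$. Given $\lambda \in \liehf^*$ as in the statement, pick any lift $\tilde{\lambda} \in \liehf_{\mathfrak{gl}}^*$ of $\lambda$ and write $\tilde{\lambda} = (a_1, \dots, a_{\ell+1})$ in the standard coordinates; since the coroot $\alpha_{ij}^\vee$ of the root $\alpha_{ij} = \epsilon_i - \epsilon_j$ is traceless, $a_i - a_j = \lambda(\alpha_{ij}^\vee)$ for all $i,j$, independently of the lift. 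The Fock module $\widetilde{\pi}$ of $\fock[\mathfrak{gl}]$ of highest weight $\tilde{\lambda}$ then restricts to $\walg(\mathfrak{gl}_{\ell+1}) = \mathcal{H} \otimes \walg$ as $\widetilde{\pi} \cong \pi_a \otimes \pi_\lambda$, where $\pi_a$ is the rank-one Fock module of highest weight $a := \tfrac{1}{\ell+1}\sum_i a_i$ (a free parameter) and $\pi_\lambda$ is the module in the statement.

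First I would record the elementary reduction: if $\pi_\lambda$ had a nonzero proper $\walg$-submodule $W$, then $\pi_a \otimes W$ would be a nonzero proper $\walg(\mathfrak{gl}_{\ell+1})$-submodule of $\widetilde{\pi}$ (it is preserved by each of the two tensor factors of $\mathcal{H} \otimes \walg$, it is nonzero, and it is proper because $W \subsetneq \pi_\lambda$). Hence it suffices to prove that $\widetilde{\pi}$ is irreducible as a $\walg(\mathfrak{gl}_{\ell+1})$-module for some permissible value of $a$.

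For this last step I would appeal to \cite{fkrw}. Through the boson--fermion correspondence, $\widetilde{\pi}$ is a quasifinite highest weight module over the Lie algebra $\widehat{\mathcal{D}}$ of regular differential operators on the circle, at central charge $N = \ell+1$; since $\widetilde{\pi}$ is in fact a $\walg(\mathfrak{gl}_{\ell+1})$-module and $\walg(\mathfrak{gl}_{\ell+1}) \cong \mathcal{W}_{1+\infty,N}$, its structure is governed by the classification of irreducible $\mathcal{W}_{1+\infty,N}$-modules in \cite{fkrw}. That classification (equivalently, the explicit characters of these modules) shows that $\widetilde{\pi}$ is irreducible precisely when the charges $a_1, \dots, a_{\ell+1}$ lie in pairwise distinct cosets modulo $\integers$, i.e.\ when $a_i - a_j \notin \integers$ for all $i \neq j$. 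Since $a_i - a_j = \lambda(\alpha_{ij}^\vee)$, this is exactly the hypothesis that $\lambda(\alpha^\vee) \notin \integers$ for all roots $\alpha$ of $\liegf$, and it holds for every choice of $a$. Therefore $\widetilde{\pi}$ is irreducible over $\walg(\mathfrak{gl}_{\ell+1})$, and the reduction above gives that $\pi_\lambda$ is irreducible over $\walg$.

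I expect the main obstacle to be the bookkeeping with conventions and normalizations rather than any substantial new argument: one must match the central charge $N = \ell+1$ of $\mathcal{W}_{1+\infty}$ with the normalization $k + \coxdual = 1$ in which $\walg$ is taken here (so that the conformal vectors of $\walg$ and $\fock$ agree), justify the factorization $\walg(\mathfrak{gl}_{\ell+1}) \cong \mathcal{H} \otimes \walg$ together with the matching decomposition $\widetilde{\pi} \cong \pi_a \otimes \pi_\lambda$ of Fock modules, and translate the irreducibility criterion of \cite{fkrw} into the condition $a_i - a_j \notin \integers$. Once this dictionary is in place the proof is short, since the genuinely hard input --- that a free-field $\widehat{\mathcal{D}}$-module at positive integral central charge is irreducible exactly for generic charges --- is precisely what \cite{fkrw} supplies.
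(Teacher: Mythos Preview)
Your proposal is correct and follows essentially the same approach as the paper: reduce from $\walg = \walg(\mathfrak{sl}_{\ell+1})$ to $\walg(\mathfrak{gl}_{\ell+1})$ via the tensor decomposition $\walg(\mathfrak{gl}_{\ell+1}) \cong \mathcal{H} \otimes \walg$, then invoke \cite{fkrw} to establish irreducibility of the $\mathfrak{gl}$-side Fock module under the hypothesis $a_i - a_j \notin \integers$. The paper differs only in execution details: it fixes the lift with $\tilde\lambda(I)=0$, cites \cite{frenkel-huang-lepowsky} for the full ``iff'' in the reduction (whereas your elementary one-directional argument suffices), and spells out the FKRW input as an explicit character comparison---computing $\ch V(\tilde\lambda)$ via the primitive $\woi$-module character formula and matching it against $\ch \pi_{\tilde\lambda}$---rather than appealing to a classification statement.
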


For the proof, we shall use the main theorem of \cite{fkrw} which relates representations of the Lie algebra $\woneinf$ with those of the $\walg$-algebra of $\gln \complex$. We recall the relevant notations and results in the next two subsections.

\subsection{The $\walg$-algebra of $\gln$}\label{sec:wgl}
The vertex algebra $\walg(\gln)$ is defined analogously to $\walg(\sln)$. Let $\liehf(\gln)$ denote the set of diagonal matrices in $\gln\complex$ and let $\varepsilon_i \in \liehf(\gln)^*$ be defined by $\varepsilon_i(H) = H_{ii}$ for each diagonal matrix $H$. Consider the integral lattice $\Qtil = \sum_i \integers \varepsilon_i$ with bilinear form $\form{\varepsilon_i}{\varepsilon_j} = \delta_{ij}$. The lattice vertex algebra $\lattva[\Qtil]$ is defined as in \S\ref{sec:rl-va}:
\[ \lattva[\Qtil] = \focktil \otimes_\complex \twistga[\Qtil], \]
where the Fock space $\focktil$ is the symmetric algebra on the space $\sum_{j < 0} \; \liehf(\gln) \otimes t^j$ and $\varepsilon$ is a certain bimultiplicative cocyle on $\Qtil$. The state-field correspondence is also analogous to that in \S\ref{sec:rl-va} (see \cite[Theorem 5.5]{kac-vafb} for details). The lattice vertex algebra is now $\frac{1}{2} \integers$-graded, with the grade 1 piece $\vqtilgrad[1]$ being spanned by   $ht^{-1} \otimes 1$ and $1 \otimes e^\alpha$ for $h \in \liehf(\gln), \alpha  \in \roots$, where $\roots = \{\varepsilon_i - \varepsilon_j: i \neq j\}$. We identify $\vqtilgrad[1]$ with $\gln$.

Define $\walg(\gln)$  to be the vertex subalgebra of $\lattva[\Qtil]$ given by:
\[ \walg(\gln) := \bigcap_{X \in \vqtilgrad[1]} \ker_{{}_{\lattva[\Qtil]}} \, X_{(0)}. \]
There is a one-parameter family of conformal vectors of $\walg(\gln)$ defined by (\cite[\S 4]{fkrw}):
\begin{equation}\label{eq:wgl-confvec}
\omega_a =  a I(-2) \vac + \frac{1}{2} \sum_{i=1}^{\ell+1} u_i(-1) u^i(-1) \vac, 
\end{equation}
where $a \in \complex$, $\{u_i\}, \{u^i\}$ are dual bases of $\liehf(\gln)$ with respect to the form $\form{\cdot}{\cdot}$, $I$ is the identity matrix, $\vac = 1\otimes 1$ is the vacuum vector of $\lattva[\Qtil]$ and  $h(k)$ is the operator $ht^k \otimes \id$ on $\lattva[\Qtil]$ for  $h \in \liehf(\gln), \, k \in \integers$.

\subsection{$\gln$ vs $\sln$}\label{sec:glvssl}
Since $\liehf(\gln) = \liehf(\sln) \oplus \complex I$, where $I$ is the identity matrix, the corresponding Fock spaces are related by:
$ \focktil = \fock \otimes \fock[1]$ (tensor product of vertex algebras), 
where $\fock[1]$ is the symmetric algebra on  $\sum_{j < 0} \; I \otimes t^j$. 
It is easily observed \cite[Remark 4.2]{fkrw} that
\[\walg(\gln) \cong \walg(\sln) \otimes \fock[1] \;\;\subset \fock \otimes \fock[1].\]

Given $\lambda \in \liehf(\sln)^*$, we extend it to $\lamtil \in \liehf(\gln)^*$ by defining $\lamtil(I) =0$. 
Let $\pi_\lambda, \, \pi_{\lamtil}$ denote the corresponding irreducible  representations of $\fock$ and $\focktil$ as in \S\ref{sec:oscrep}. Viewing $\fock[1]$ as a module over itself, we have:
\[ \pi_{\lamtil} \cong \pi_\lambda \otimes \fock[1] \]
as $\fock \otimes \fock[1]$-modules, and hence by restriction as $\walg(\sln) \otimes \fock[1]  \cong \walg(\gln)$-modules.

Now $\fock[1]$ is an irreducible module over itself. Hence, by \cite[Proposition 4.7.2]{frenkel-huang-lepowsky} (whose mild technical conditions hold in this case), we conclude that $\pi_{\lamtil}$ is an irreducible $\walg(\gln)$-module if and only if $\pi_\lambda$ is an irreducible $\walg(\sln)$-module.

\subsection{The Lie algebra $\woi$}\label{sec:woneinfgl}
In this subsection, we recall the definition and key properties of the Lie algebra $\woi$, following \cite{fkrw}. Let $\scrD \subset \End_\complex \,\complex[t,t^{-1}]$ denote the Lie algebra of regular differential operators on $\complex^\times$, with the usual bracket. Each of the following collections forms a basis of $\scrD$:
\begin{enumerate}
\item $J^{\ell}_k = - t^{\ell+ k} (\partial_t)^k$ with $k, \ell \in \integers$, $k \geq 0$,
\smallskip\item $L^{\ell}_k = -t^\ell (t \partial_t)^k$ with $k, \ell \in \integers$, $k \geq 0$,
\end{enumerate}
where $\partial_t = \frac{\partial}{\partial t}$. This Lie algebra has a $\complex$-valued 2-cocyle $\psi$ given by:
\[
\psi(f(t) \partial_t^m, g(t) \partial_t^n) = \frac{m! \,n!}{(m+n+1)!} \Res\, (\partial_t^{n+1}f(t)) \,(\partial_t^m g(t)),
\]
where as usual, for a Laurent polynomial $f \in \complex[t,t^{-1}]$, $\Res f(t)$ denotes the coefficient of $t^{-1}$ in $f(t)$. We let $\woi = \scrD \oplus \complex C$ be the one-dimensional central extension of $\scrD$ defined by the cocyle $\psi$, i.e., with Lie bracket defined by
$[X,Y]:=[X,Y]_{\scrD} +\psi(X,Y) \,C$ for all $X, Y \in \scrD$.

Consider the Lie subalgebra $\scrD[P] = \mathrm{span} \{J^{\ell}_k: \ell + k \geq 0\}$ of $\scrD$ and let $\widehat{\scrD[P]} = \scrD[P] \oplus \complex C \subset \woi$. Given $c \in \complex$, we form the induced $\woi$-module
\[ M_c = U\woi \otimes_{U\widehat{\scrD[P]}} \,\complex, \]
where $C$ acts as $c$ on the one-dimensional space $\complex$ and $\scrD[P]$ acts as zero. The module $M_c$ admits a unique irreducible quotient $V_c$. We recall that $V_c$ is a vertex algebra and that $V_c$-modules may be canonically viewed as $\woi$-modules on which $C$ acts by the scalar $c$. Further, $V_c$ has a one-parameter family of conformal vectors \cite[Theorem 3.1]{fkrw}:
\begin{equation}\label{eq:vacmod-confvec}
\omega(\beta) = (J^1_{-2} - \beta J^0_{-2}) \vac,
\end{equation}
where $\vac$ is the image of $1 \otimes 1$ in $V_c$. The central charge of the corresponding Virasoro field is $-(12\beta^2 - 12\beta +2)c$.
Now, the key fact of relevance to us is the following \cite[Theorem 5.1]{fkrw}:

\begin{proposition} (Frenkel-Kac-Radul-Wang)\label{prop:fkrw-mainprop}
There is an isomorphism of vertex algebras $V_{\ell+1} \to \walg(\gln)$ which maps the conformal vectors $\omega(\beta) \mapsto \omega_{1/2 - \beta}$ (equations~\eqref{eq:wgl-confvec} and \eqref{eq:vacmod-confvec}).\footnote{There seems to be a sign error in \cite{fkrw}, where this appears as $\omega(\beta) \mapsto \omega_{\beta-1/2}$. This does not affect our computation either way.} Hence, any representation of the vertex algebra $\walg(\gln)$ can be canonically lifted to a representation of $V_{\ell+1},$ or equivalently, to a representation of the Lie algebra $\woi$ with central charge $\ell+1$.
\end{proposition}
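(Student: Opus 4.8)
The plan is to follow Frenkel--Kac--Radul--Wang and realize both vertex algebras inside a common free-field vertex superalgebra, exploiting the Howe-type duality between $\woi$ (at central charge $N:=\ell+1$) and $\widehat{\mathfrak{gl}}_N$ at level one. First I would use the boson--fermion correspondence to identify $\lattva[\Qtil]$ (the lattice vertex superalgebra on $\Qtil=\sum_i\integers\varepsilon_i$, $\form{\varepsilon_i}{\varepsilon_j}=\delta_{ij}$) with the vertex superalgebra generated by $N$ pairs of charged free fermions $b^i(z),c^i(z)$, $1\le i\le N$, whose only nontrivial operator product is $b^i(z)c^j(w)\sim\delta_{ij}(z-w)^{-1}$. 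Under this identification the weight-one space $\vqtilgrad[1]\cong\gln$ is spanned by the currents $E^{ij}(z)={:}b^i(z)c^j(z){:}$, so that the zero modes $X_{(0)}$, $X\in\vqtilgrad[1]$, defining $\walg(\gln)$ are exactly the zero modes of the $\widehat{\mathfrak{gl}}_N$-currents. I would then introduce, for each $k\ge 0$, the weight-$(k+1)$ field
\[
\widehat{J}^{(k)}(z)\;=\;\sum_{i=1}^{N}{:}c^i(z)\,\partial_z^{\,k}b^i(z){:},
\]
and verify by a residue/operator-product computation that the modes of the $\widehat{J}^{(k)}$ satisfy the relations of $\woi$ with $C$ acting by the scalar $N$, i.e.\ reproduce the two-cocycle $\psi$. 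Since $M_{\ell+1}$ carries a canonical vertex algebra structure with the universal property, this yields a vertex algebra homomorphism $\tilde\Phi:M_{\ell+1}\to\lattva[\Qtil]$. A second, parallel computation shows each $\widehat{J}^{(k)}(z)$ has regular OPE with every $E^{ij}(w)$, equivalently $[X_{(0)},\widehat{J}^{(k)}_n]=0$; hence the image $\mathcal{A}:=\operatorname{im}\tilde\Phi$ lies in $\bigcap_X\ker X_{(0)}=\walg(\gln)$. This is precisely the statement that $(\woi,\widehat{\mathfrak{gl}}_N)$ form a commuting pair on the fermionic Fock space.

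To upgrade this to the isomorphism $V_{\ell+1}\cong\walg(\gln)$ I would compare graded dimensions (grading everything by conformal weight; the gradings correspond under $\tilde\Phi$). Since $\mathcal{A}$ is a quotient of $M_{\ell+1}$ and $V_{\ell+1}$ is the \emph{minimal} quotient (its unique irreducible quotient), we have $\ch\mathcal{A}\ge\ch V_{\ell+1}$ coefficientwise; and $\mathcal{A}\subseteq\walg(\gln)$ gives $\ch\mathcal{A}\le\ch\walg(\gln)$. On the target side, Theorem~\ref{thm:ff} (Feigin--Frenkel) shows $\walg(\sln)$ is freely generated by fields of weights $2,\dots,\ell+1$; tensoring with the rank-one Heisenberg factor $\fock[1]$ of \S\ref{sec:glvssl} shows $\walg(\gln)$ is freely generated by fields of weights $1,\dots,N$, so that
\[
\ch\walg(\gln)=\prod_{d=1}^{N}\ \prod_{n\ge d}(1-q^{n})^{-1}.
\]
On the source side, the quasifiniteness theorem of Kac--Radul underlying \cite{fkrw} computes $\ch V_{\ell+1}$ to be the \emph{same} product: at the integral charge $c=N$ the module $V_N$ truncates so as to be freely generated by only its first $N$ currents $\widehat{J}^{(0)},\dots,\widehat{J}^{(N-1)}$, of weights $1,\dots,N$. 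Thus $\ch V_{\ell+1}=\ch\walg(\gln)$, and the sandwich $\ch V_{\ell+1}\le\ch\mathcal{A}\le\ch\walg(\gln)=\ch V_{\ell+1}$ forces $\mathcal{A}=\walg(\gln)$ together with $\mathcal{A}\cong V_{\ell+1}$ (the latter because $\ker\tilde\Phi$ must be the maximal proper ideal). Hence $\tilde\Phi$ factors through an isomorphism $\Phi:V_{\ell+1}\xrightarrow{\sim}\walg(\gln)$.

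It remains to match the conformal vectors by a direct free-field computation. Under $\Phi$ the spin-one current $J^0$ maps to the $U(1)$-current $I(z)$, so $\Phi(J^0_{-2}\vac)=I(-2)\vac$, while the spin-two current $J^1$ maps to $\sum_i{:}c^i\,\partial b^i{:}$; bosonizing the latter yields $\tfrac12\sum_i u_i(-1)u^i(-1)\vac+\tfrac12\,I(-2)\vac$, the $\tfrac12\,\partial I$ improvement term reflecting the difference between the non-symmetrized field and the canonical fermionic stress tensor. Reading off \eqref{eq:vacmod-confvec} we obtain
\[
\Phi(\omega(\beta))=\Phi\big((J^1_{-2}-\beta J^0_{-2})\vac\big)=\tfrac12\sum_i u_i(-1)u^i(-1)\vac+\big(\tfrac12-\beta\big)I(-2)\vac=\omega_{1/2-\beta},
\]
matching \eqref{eq:wgl-confvec}. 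As a consistency check, the Virasoro central charge $-(12\beta^2-12\beta+2)N$ of $\omega(\beta)$ coincides with that of the improved free-boson stress tensor $\omega_{1/2-\beta}$, namely $N\big(1-12\big(\tfrac12-\beta\big)^2\big)$; this also explains why the computation is sign-sensitive (cf.\ the footnote to \eqref{eq:wgl-confvec}).

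The main obstacle is the surjectivity step, and specifically the intrinsic input $\ch V_{\ell+1}=\prod_{d=1}^N\prod_{n\ge d}(1-q^n)^{-1}$. A priori $V_N$ is built from generating currents of \emph{every} spin $\ge1$, and the nontrivial phenomenon is that at the integral central charge $c=N$ the maximal proper submodule of $M_N$ is exactly large enough to annihilate all higher generators, rendering $V_N$ freely generated by only $N$ fields. Establishing this truncation (the quasifinite/finite-generation theorem of Kac--Radul and \cite{fkrw}) is the substantive analytic heart of the statement; the free-field construction, the commutant containment, and the conformal-vector matching are then comparatively formal.
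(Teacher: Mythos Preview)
The paper does not prove this proposition at all: it is quoted verbatim as \cite[Theorem 5.1]{fkrw} and used as a black box in the subsequent character computation. There is therefore no ``paper's own proof'' to compare against.

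Your sketch is a faithful outline of the argument in \cite{fkrw} itself---the free-fermion realization, the commuting-pair observation $(\woi,\widehat{\mathfrak{gl}}_N)$, and the character sandwich forcing the isomorphism---and is correct at the level of detail given. Your own diagnosis is accurate: the only substantive step is the truncation $\ch V_{N}=\prod_{d=1}^{N}\prod_{n\ge d}(1-q^n)^{-1}$, which is exactly the content of the Kac--Radul/FKRW analysis of quasifinite highest weight modules at integral central charge; everything else (the free-field map, the commutant containment, the bosonization of $\sum_i{:}c^i\partial b^i{:}$ giving the $\tfrac12\,\partial I$ shift and hence $\omega(\beta)\mapsto\omega_{1/2-\beta}$) is indeed formal once that is in hand. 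For the purposes of the present paper none of this is needed---the authors simply invoke the result.
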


Given $\gamma \in \liehf(\gln)^*$, consider the representation $\pi_{\gamma}$ of $\walg(\gln)$ as in \S\ref{sec:glvssl}. Let $\vac[\gamma]$ denote its highest weight vector. Let $U(\gamma)$ denote the $\walg(\gln)$-submodule of $\pi_{\gamma}$ generated by $\vac[\gamma]$. This is a cyclic, graded module and therefore has a unique irreducible quotient $V(\gamma)$. The following is one of the main results of \cite{fkrw}:

\begin{proposition}\label{prop:lift} \em{(\!\!\cite[Prop 5.1]{fkrw})}
The lifting of $V(\gamma)$ to a $\woi$-module is isomorphic to the primitive $\woi$-module with exponents $\gamma(E_{ii}): 1 \leq i \leq \ell+1$.
\end{proposition}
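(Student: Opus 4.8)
The plan is to reproduce the argument of \cite[Prop.~5.1]{fkrw}. Recall that $V(\gamma)$ is, by construction, the unique irreducible quotient of the cyclic $\walg(\gln)$-module $U(\gamma)$ generated by the highest weight vector $\vac[\gamma]$ of $\pi_\gamma$. After transporting everything along the vertex-algebra isomorphism $\walg(\gln)\cong V_{\ell+1}$ of Proposition~\ref{prop:fkrw-mainprop} and regarding it as a statement about $\woi$-modules with central charge $\ell+1$, it therefore suffices to check two things: (i) that $\vac[\gamma]$ is a highest weight vector for $\woi$, and (ii) that the resulting highest weight is the one labelled by the exponents $\gamma(E_{ii})$, $1\le i\le\ell+1$. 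Point (ii) is precisely what ``primitive module with those exponents'' unpacks to: the highest weight of a quasifinite $\woi$-module is encoded by the central charge together with the generating function $\Delta_\gamma(x)=\sum_{k\ge 0}\gamma\!\bigl(L^0_k\bigr)\,x^k/k!$ of the eigenvalues on $\vac[\gamma]$ of a commuting family of degree-zero elements (the $L^0_k\in\scrD$, $k\ge0$), and the module is primitive with exponents $x_1,\dots,x_{\ell+1}$ exactly when $\Delta_\gamma$ has the associated finite-exponential form in the $x_i$.

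Step (i) is a grading argument. The oscillator module $\pi_\gamma$ is graded (by the intrinsic Heisenberg energy, equivalently by $L_0$ for the appropriate member of the one-parameter family $\omega_a$), with grading bounded below and with $\vac[\gamma]$ spanning the one-dimensional lowest piece. Each generating field $J^k(z)$ of $\woi$ is, under Proposition~\ref{prop:fkrw-mainprop}, a homogeneous element of $\walg(\gln)\subset\focktil$, so its positive modes $J^k_n$ ($n>0$) strictly lower the grading and hence kill $\vac[\gamma]$, while $C$ acts by $\ell+1$. Thus $\vac[\gamma]$ is a $\woi$-highest weight vector, and $V(\gamma)$ --- the irreducible $\woi$-module it generates --- is the irreducible highest weight $\woi$-module with whatever labels $\vac[\gamma]$ carries.

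The substantive step is computing those labels, i.e.\ the scalars $L^0_k\vac[\gamma]$. Here I would use the explicit free-field realization of $\walg(\gln)=V_{\ell+1}$ inside $\focktil$ in terms of the $\ell+1$ free bosons $u_i=E_{ii}$ of \S\ref{sec:wgl}: since $\pi_\gamma$ is an oscillator module, it has no ``lattice'' ($e^\alpha$) component and all strictly positive Heisenberg modes annihilate $\vac[\gamma]$, so after normal ordering, the only surviving contribution to $L^0_k\vac[\gamma]$ (equivalently $J^k_0\vac[\gamma]$) comes from the products of the zero modes $u_{i,0}$, which act by $\gamma(u_i)=\gamma(E_{ii})$. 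Carrying out this bookkeeping turns $\Delta_\gamma(x)$ into exactly the exponential expression that \cite{fkrw} attach to the exponents $\gamma(E_{ii})$; combined with central charge $\ell+1$, this identifies the lifting of $V(\gamma)$ with the asserted primitive $\woi$-module.

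I expect the main obstacle to be this last computation. One has to pin down the precise free-field form of the generators $J^k$ under the isomorphism of Proposition~\ref{prop:fkrw-mainprop} --- which includes fixing the correct member of the family $\omega_a$, so that $\pi_\gamma$ acquires the expected $\woi$-grading and $\vac[\gamma]$ genuinely sits at the top --- and then match the resulting polynomials in the $\gamma(E_{ii})$ against \cite{fkrw}'s normalization of exponents. Once the dictionary relating $J^k(z)$ to the bosons $u_i$ is in place, the rest is the grading argument of step (i) together with a finite normal-ordering calculation.
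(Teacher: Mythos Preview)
The paper does not actually prove this proposition: it is stated as a citation of \cite[Prop.~5.1]{fkrw} and is used as a black box, with the reader referred to \cite{fkrw} even for the definitions of ``primitive $\woi$-module'' and ``exponents.'' There is therefore no proof in the paper to compare your proposal against.

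That said, your sketch is a reasonable outline of the FKRW argument and correctly identifies the two ingredients: the grading argument that makes $\vac[\gamma]$ a $\woi$-highest weight vector, and the normal-ordering computation that reads off the eigenvalues of the zero modes in terms of the $\gamma(E_{ii})$. Your caveat about pinning down the precise free-field expressions for the $J^k$ (or equivalently $L^0_k$) under the isomorphism of Proposition~\ref{prop:fkrw-mainprop} is well placed: this is where the actual work lies, and it requires the explicit bosonization formulas from \cite{fkrw} rather than anything developed in the present paper. For the purposes of this paper, however, the intended ``proof'' is simply the citation.
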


We refer the reader to \cite{fkrw} for the definition of {\em primitive $\woi$-modules} and {\em exponents}. The character of such modules was also determined by Frenkel-Kac-Radul-Wang. We now state this result in the special case of interest to us.

\begin{proposition}\label{prop:primchar}
  Let $s_i \; (1 \leq i \leq \ell+1)$ be complex numbers such that $s_i \not\equiv s_j \pmod{\integers}$ for all $i \neq j$. Then the character of the primitive $\woi$-module $M$ with exponents $\{s_i\}$ is given by:
  \begin{equation}\label{eq:trprim}
    \tr_M(q^{\lzo}) = \frac{q^{\sum_i \, s_i(s_i-1)/2}}{\varphi(q)^{\ell+1}}.
  \end{equation}
 
\end{proposition}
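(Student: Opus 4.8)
The assertion is the generic specialisation of the general character formula for primitive $\woi$-modules obtained by Frenkel--Kac--Radul--Wang \cite{fkrw}, so the plan is to recall that formula and evaluate it under the hypothesis $s_i \not\equiv s_j \pmod{\integers}$. First I would recall from \cite{fkrw} the free-field construction of a primitive $\woi$-module of central charge $\ell+1$ with exponents $\{s_i\}$: it is realised inside a tensor product of Fock spaces, one ``slot'' for each exponent, and in this realisation the grading operator is $\lzo = \omega(\tfrac12)_0$, where $\omega(\tfrac12) = (J^1_{-2} - \tfrac12 J^0_{-2})\vac$ in the notation of \eqref{eq:vacmod-confvec}. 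The value $\beta = \tfrac12$ is the relevant one because, under the isomorphism of Proposition~\ref{prop:fkrw-mainprop}, $\omega(\tfrac12)$ corresponds to the standard (unshifted) Segal--Sugawara conformal vector $\omega_0$ of $\walg(\gln)$.

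Next I would carry out the two elementary computations that \eqref{eq:trprim} packages. For the denominator: the hypothesis $s_i \not\equiv s_j \pmod{\integers}$ means the exponents occupy distinct congruence classes modulo $\integers$, and in the FKRW formula distinct classes contribute independent factors, a singleton class contributing (up to an overall power of $q$) the character $\prod_{n\geq 1}(1-q^n)^{-1} = \varphi(q)^{-1}$ of a single rank-one free boson Fock space; the product over the $\ell+1$ exponents is $\prod_{n\geq 1}(1-q^n)^{-(\ell+1)} = \varphi(q)^{-(\ell+1)}$. Equivalently, under $V_{\ell+1}\cong\walg(\gln)$ the genericity hypothesis says the primitive module is an \emph{irreducible Verma module} of $\walg(\gln)$ attached to a generic highest weight, and since $\walg(\gln)$ is freely generated by $\ell+1$ fields (of degrees $1,2,\dots,\ell+1$), its PBW basis --- the $\gln$-analogue of the one in \S\ref{sec:wvermazhu} --- has graded dimension $\prod_{n\geq 1}(1-q^n)^{-(\ell+1)}$ over the one-dimensional top space. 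For the numerator: the $-\tfrac12 J^0_{-2}$ term in $\omega(\tfrac12)$ contributes a linear ``background-charge'' correction to the zero mode, so $\lzo$ acts on the cyclic highest weight vector by the scalar $\sum_i\bigl(\tfrac12 s_i^2-\tfrac12 s_i\bigr)=\sum_i s_i(s_i-1)/2$; this comes from the FKRW dictionary expressing the highest weight (its eigenvalues on $J^0_0$ and $J^1_0$) in terms of the power sums of the exponents. Combining, $\tr_M(q^{\lzo}) = q^{\sum_i s_i(s_i-1)/2}\,\varphi(q)^{-(\ell+1)}$, which is \eqref{eq:trprim}.

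The main obstacle is marshalling the input from \cite{fkrw}: one must extract their general character formula for primitive $\woi$-modules and verify that pairwise incongruence of the exponents modulo $\integers$ is precisely the condition under which it degenerates into the clean product above --- equivalently, that it forces the associated $\walg(\gln)$-module to be a genuine irreducible Verma module, with no singular vectors to quotient out. Once this structural point is in hand, what remains is only the routine normalisation of the top $\lzo$-eigenvalue against the conformal vector $\omega(\tfrac12)$ and the exponent data, which is the one step that produces the precise value $\sum_i s_i(s_i-1)/2$.
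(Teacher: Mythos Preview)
Your approach is essentially the paper's: the paper's entire proof is the one-line citation ``This follows from Proposition~2.1 and equation~(2.5) of \cite{fkrw}'', and you are doing the same thing with more commentary on why the genericity hypothesis collapses the FKRW formula to the clean product.

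One normalisation slip to flag: you identify $\lzo$ with $\omega(\tfrac12)_0$, but in the paper's subsequent proof of Theorem~\ref{thm:fkrw-thm} the identification actually used is $\omega(0)_0 = \lzo$ (with $\beta=0$, so that under Proposition~\ref{prop:fkrw-mainprop} one lands on $\omega_{1/2}$ on the $\walg(\gln)$ side, and it is the $\tfrac12 I(-2)$ term of $\omega_{1/2}$ that produces the linear shift $-\tfrac12\sum_i s_i$). Your final eigenvalue $\sum_i s_i(s_i-1)/2$ is correct, but it comes out right only because you have simultaneously mis-placed the background-charge correction on the $\woi$ side; if you consistently used $\beta=\tfrac12$ and transported to $\omega_0$ on the $\walg(\gln)$ side you would get $\sum_i s_i^2/2$ instead. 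This does not affect the strategy, but you should reconcile the choice of $\beta$ with the convention fixed elsewhere in the paper before writing the argument down.
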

This follows from Proposition 2.1 and equation (2.5) of \cite{fkrw}. \qed

\subsection{Proof of Theorem~\ref{thm:fkrw-thm}} We use the notations introduced in the above subsections. We have by \S\ref{sec:glvssl} that $\pi_\lambda$ is an irreducible $\walg(\sln)$-module iff  $\pi_{\lamtil}$ is an irreducible $\walg(\gln)$-module. Since $V(\lamtil)$ is a subquotient of $\pi_{\lamtil}$, the latter assertion is equivalent to the assertion that $\pi_{\lamtil}$ and  $V(\lamtil)$ have the same character, i.e.,
\[ \tr_{\pi_{\lamtil}}(q^{\lzo}) = \tr_{V(\lamtil)}(q^{\lzo}). \]
Define $s_i = \lamtil(E_{ii})$. The hypothesis that ${\lambda}({\alpha^\vee}) \not\in \integers$ for all positive roots $\alpha$ of $\sln\complex$ implies that $s_i - s_j \not\in \integers$ for all $i \neq j$. Propositions~\ref{prop:lift} and \ref{prop:primchar} then imply that the character of $V(\lamtil)$ is given by the right hand side of equation~\eqref{eq:trprim}.

To compute the character of $\pi_{\lamtil}$, we need to identify $\lzo \in \woi$ with the appropriate conformal vector $\confvec_a$ of $\walg(\gln)$. We recall from Proposition~\ref{prop:fkrw-mainprop}  that under the isomorphism of vertex algebras $V_{\ell+1} \stackrel{\sim}{\to} \walg(\gln)$, their conformal vectors correspond thus:
\[ \omega(\beta) \mapsto \omega_{1/2 - \beta}.\]
Taking $\beta=0$, we obtain $\omega(0) \mapsto \omega_{1/2}$, and in particular, their zeroth modes correspond to each other. We have $\omega(0)_0 = \lzo$ and
\[(\omega_{1/2})_0 \vac[\lamtil] = \frac{\form{\lamtil}{\lamtil}}{2} - \frac{\lamtil(I)}{2} = {\sum_i \, s_i(s_i - 1)/2}. \]
So, by \S\ref{sec:oscrep}, the character of $\pi_{\lamtil}$ becomes:
\[ \tr_{\pi_{\lamtil}}(q^{\lzo}) = \tr_{\pi_{\lamtil}}(q^{(\omega_{1/2})_0}) = \frac{q^{\sum_i \, s_i(s_i -1)/2}}{\varphi(q)^{\ell+1}}. \]
This matches up correctly with the expression in \eqref{eq:trprim}, thereby completing the proof. \qed

\subsection{A basis of $Z$}
Let $\walg:=\walg(\sln)$ and regard the space $Z$ as a $\walg$-module as before. 
\begin{proposition}\label{prop:zverma}
$Z$ is isomorphic to a Verma module of $\walg$. Further, the set of vectors  in \eqref{eq:ffbasis-twisted} forms a basis of $Z$.
\end{proposition}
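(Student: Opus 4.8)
The plan is to reduce the statement to facts already in place. First recall from \S\ref{sec:zmzeta} that under the $\lieg$-module isomorphism $\vacm \to M_\zeta$ of \S\ref{sec:zeta-voc} the subspace $Z$ corresponds to $\fock \otimes e(\rho/\coxn) \subset M_\zeta$, and that as a module over the Heisenberg vertex algebra $\fock$ this is precisely the oscillator representation $\pi_{\rho/\coxn}$ of \S\ref{sec:oscrep}; restricting along $\walg \subset \fock$ this is, in particular, an isomorphism of $\walg$-modules. Composing with the $\walg$-module isomorphism $\Psi\colon M_\sigma \to M_\zeta$ established in Section~\ref{sec:walg}, one gets that $Z \subset M_\sigma$, with the $\walg$-action coming from the modes of the fields $Y_{M_\sigma}(\,\cdot\,,z)$, is isomorphic as a $\walg$-module to $\pi_{\rho/\coxn}$.

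Next I would check that $\lambda = \rho/\coxn$ satisfies the hypothesis of Theorem~\ref{thm:fkrw-thm}. Since $\liegf$ is simply-laced, a root $\alpha = \sum_i c_i\,\alpha_i$ has $\alpha^\vee = \sum_i c_i\,\alpha_i^\vee$, whence $(\rho/\coxn)(\alpha^\vee) = \tfrac{1}{\coxn}\sum_i c_i = \tfrac{1}{\coxn}\htt(\alpha)$. The height of a root lies in $\{\pm1,\pm2,\dots,\pm(\coxn-1)\}$ (the maximal height being $\htt(\theta) = \coxn-1$ for the highest root $\theta$), so $(\rho/\coxn)(\alpha^\vee) \notin \integers$ for every root $\alpha$ of $\liegf$. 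Theorem~\ref{thm:fkrw-thm} then shows that $\pi_{\rho/\coxn}$, and hence $Z$, is an irreducible $\walg$-module, and Proposition~\ref{prop:propa} supplies a $\walg$-module isomorphism $\pi_{\rho/\coxn} \cong \vermaw(\gamma_\mu)$ for a suitable $\mu \in \liehf^*$. Composing, we obtain a $\walg$-module isomorphism $\Theta\colon \vermaw(\gamma_\mu) \to Z$, which proves the first assertion.

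For the basis statement I would transport the PBW basis of $\vermaw(\gamma_\mu)$ recalled in \S\ref{sec:wvermazhu} through $\Theta$. The cyclic vector $\vac[\gamma_\mu]$ spans the one-dimensional top-degree space of $\vermaw(\gamma_\mu)$; tracing the isomorphisms above (under which $\vac[\gamma_\mu]$ maps to the Heisenberg highest-weight vector of $\pi_{\rho/\coxn}$, and the latter maps into the energy-zero space of $Z$), one sees that $\Theta(\vac[\gamma_\mu])$ lies in the one-dimensional space $Z_0 = L(\Lambda_0)_{\Lambda_0} = \complex\hwvec$, so $\Theta(\vac[\gamma_\mu]) \in \complex\hwvec\setminus\{0\}$. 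As $\Theta$ is a morphism of $\walg$-modules, it intertwines the operator $\omd[p]_k$ on $\vermaw(\gamma_\mu)$ with the operator $\omd[p]_k(\sigma)$ on $Z$; therefore $\Theta$ carries the PBW basis \eqref{eq:wpbw} of $\vermaw(\gamma_\mu)$ onto nonzero scalar multiples of the vectors listed in \eqref{eq:ffbasis-twisted}, which hence form a basis of $Z$.

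Essentially all of the content is already available: irreducibility of $\pi_{\rho/\coxn}$ (Theorem~\ref{thm:fkrw-thm}, ultimately Frenkel--Kac--Radul--Wang), the identification of an irreducible $\pi_\lambda$ with a Verma module (Proposition~\ref{prop:propa}, via the character computation), and the PBW basis of a Verma module of $\walg$ (Arakawa). What remains is bookkeeping --- verifying that the three $\walg$-module structures in play ($Z$ via $Y_{M_\sigma}$, $\fock\otimes e(\rho/\coxn)$ via $Y_{M_\zeta}$, and $\pi_{\rho/\coxn}$) are matched up by the stated isomorphisms, and that the $\omega_0$-grading on $Z$ is, up to an overall constant, the energy grading, so that $\hwvec$ occupies the top-degree space of $Z$ regarded as a graded $\walg$-module. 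I expect this matching of gradings and identifications, rather than any new estimate, to be the only (minor) obstacle.
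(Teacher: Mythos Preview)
Your proposal is correct and follows essentially the same route as the paper: identify $Z$ with $\pi_{\rho/\coxn}$ via \S\ref{sec:zmzeta}, apply Theorem~\ref{thm:fkrw-thm} and Proposition~\ref{prop:propa}, and then read off the PBW basis \eqref{eq:wpbw}. You supply more detail than the paper does---in particular, the explicit height argument showing $(\rho/\coxn)(\alpha^\vee)\notin\integers$, and the transport of the PBW basis through $\Psi$ to match the $\sigma$-modes in \eqref{eq:ffbasis-twisted}---but the underlying argument is identical.
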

\begin{proof}
  The first part follows by applying Proposition~\ref{prop:propa} and Theorem~\ref{thm:fkrw-thm} to the subspace $Z$ identified with $\fock \otimes e(\rho/\coxn) \subset M_\zeta$. The element $\lambda$ in this case is just $\rho/\coxn$ which clearly satisfies the hypothesis of Theorem~\ref{thm:fkrw-thm}.
  For the second part, observe that the set of vectors in  \eqref{eq:ffbasis-twisted} is now exactly the PBW basis of the Verma module $Z$  (equation~\ref{eq:wpbw}). 

\end{proof}

\section{Brylinski-compatibility of the basis of $Z$}\label{sec:bryl-comp}
We now complete the proof of Theorem~\ref{thm:main-thm-intro}. We begin with the following simple lemma.
\begin{lemma}\label{lem:omega-modes-property}
For $1 \leq p \leq \ell$ and all $k \in \integers$, 
$\omd[p]_k(\sigma)$ maps $F^i \vacm$ to $F^{i+d_p} \vacm$.
\end{lemma}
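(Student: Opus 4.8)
The plan is to deduce Lemma~\ref{lem:omega-modes-property} from Lemma~\ref{lem:fockmodes-bryl}(2), which already handles modes of arbitrary elements of the Heisenberg vertex algebra $\fock$. The only gap to bridge is that $\omd[p] \in \walg \subset \fock$, so we need to know in which graded piece $\fock^{[d]}$ the element $\omd[p]$ lives.

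First I would recall that $\walg$ is a vertex subalgebra of $\fock$ whose conformal vector coincides with that of $\fock$ (equation~\eqref{eq:conflva}), and that with respect to this conformal grading $\omd[p]$ is a homogeneous element of degree $d_p$ by Theorem~\ref{thm:ff}. Since the $L_0$-grading on $\fock$ restricts to the grading $\fock = \bigoplus_{d \geq 0} \fock^{[d]}$ (with $\fock^{[d]}$ the span of monomials $h^1_{(j_1)} \cdots h^k_{(j_k)} \vac$ with $\sum j_i = -d$), we conclude $\omd[p] \in \fock^{[d_p]}$. This is the one nontrivial identification, and it is immediate once one notes that the conformal weight and the Heisenberg energy grading agree on $\fock$.

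Next I would simply invoke Lemma~\ref{lem:fockmodes-bryl}(2): for any $X \in \fock^{[d]}$, $j \in \tfrac{1}{\coxn}\integers$ and $i \geq 0$, the mode $X_{(j)}(\sigma)$ of the field $Y_{M_\sigma}(X,z)$ maps $F^i M_\sigma$ to $F^{i+d} M_\sigma$. Applying this with $X = \omd[p]$, $d = d_p$, and noting that the field $Y_{M_\sigma}(\omd[p],z)$ is exactly the one whose modes are renumbered as $\omd[p]_k(\sigma)$ (via $Y_{M_\sigma}(\omd[p],z) = \sum_k \omd[p]_k(\sigma) z^{-k-d_p}$, so that $\omd[p]_k(\sigma) = \omd[p]_{(k+d_p-1)}(\sigma)$ in the $(n)$-mode notation), we get precisely that $\omd[p]_k(\sigma)$ maps $F^i M_\sigma$ to $F^{i+d_p} M_\sigma$. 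Since $\vacm$ is identified with $M_\sigma$ and the Brylinski filtration is defined on $M_\sigma$, this is the assertion of the lemma. Finally I would observe that the index $k$ being an integer (rather than a fraction in $\tfrac{1}{\coxn}\integers$) is automatic since $\omd[p] \in \walg$ is fixed by $\widetilde\sigma$, so its field on the $\widetilde\sigma$-twisted module $M_\sigma$ has only integral modes; thus the restriction $k \in \integers$ in the statement is consistent.

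There is no serious obstacle here: the lemma is essentially a bookkeeping corollary of Lemma~\ref{lem:fockmodes-bryl}. The only point requiring a moment's care is the compatibility of the conformal grading on $\walg$ with the Heisenberg energy grading on $\fock$ --- i.e.\ checking that $\omd[p]$, being of conformal degree $d_p$, indeed lies in $\fock^{[d_p]}$ and not merely in $\bigoplus_{d \leq d_p} \fock^{[d]}$ --- but since both gradings are the $L_0$-eigenspace decomposition for the same conformal vector, they coincide, and this is immediate.
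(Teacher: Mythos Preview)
Your proposal is correct and follows essentially the same approach as the paper: observe that $\omd[p] \in \fock^{[d_p]}$ (since the conformal grading on $\walg$ coincides with the $L_0$-grading on $\fock$), then apply Lemma~\ref{lem:fockmodes-bryl}(2). The paper's proof is a one-liner to the same effect; your additional remarks about the mode renumbering and the integrality of $k$ are correct but not strictly needed.
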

\begin{proof}
  Since $\omd[p]$ lies in $\fock^{[d_p]}$, this is a direct consequence of Lemma~\ref{lem:fockmodes-bryl}.
\end{proof}

\subsection{Generalities on filtrations and bases}
Let $M$ be a finite-dimensional vector space with a filtration $M_0 \subset M_1 \subset M_2 \cdots\;$ such that $\bigcup M_i = M$. We consider $\gr M = \displaystyle\bigoplus_{k \geq 0} M_k/  M_{k-1}$, where $M_{-1}:=0$. For $v \in M$, we define $\gr v \in \gr M$ to be the image of $v$ under the projection $M_k \twoheadrightarrow M_k/ M_{k-1} $, where $k \geq 0$ is minimal such that $v \in M_k$. 

\begin{definition}\label{def:filt-compat-basis}
  A basis $B$ of $M$ is said to be compatible with the filtration $\{M_i\}$ if $B \cap M_i$ is a basis of $M_i$ for all $i$, or equivalently, if
$\{\gr v: v \in B\}$ is a basis of $\gr M$.
\end{definition}

It is elementary to check the equivalence of the two descriptions in the definition above.

\begin{lemma}\label{lem:filtration-grbasis}
Let $\{M_i\}_{i=0}^m$ be a filtration of $M$ as above. Suppose $B$ is a basis of $M$ and $\{B^i\}_{i=0}^m$ is a collection of pairwise disjoint subsets of $B$ such that $\bigsqcup B^i = B$. Suppose further that for all $i \geq 0$:
\be
\item $B^i \subset M_i\; ,$
\item\label{item:second} $| B^i| = \dim \left(M_i/M_{i-1}\right)$.
\ee
Then $B$ is compatible with the filtration $\{M_i\}$.
\end{lemma}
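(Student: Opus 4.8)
The statement is a purely linear-algebra fact about finite-dimensional filtered vector spaces, so the proof should be short and self-contained. The strategy is to show that for each $i$, the set $B^i \cup B^{i-1} \cup \cdots \cup B^0 = B \cap M_i$ (which we must first verify) has the right cardinality to be a basis of $M_i$, and that it lies inside $M_i$, so linear independence forces it to be a basis. The bookkeeping all comes down to a telescoping-sum count of dimensions.

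\emph{Key steps.} First I would observe that since $B^0 \subset M_0$, $B^1 \subset M_1$, \ldots, $B^i \subset M_i$ and the filtration is increasing, every element of $B^0 \sqcup B^1 \sqcup \cdots \sqcup B^i$ lies in $M_i$. Conversely, I claim $B \cap M_i = B^0 \sqcup \cdots \sqcup B^i$: the inclusion $\supseteq$ is immediate, and for $\subseteq$, if $b \in B \cap M_i$ then $b \in B^j$ for a unique $j$ (as $\bigsqcup B^j = B$), and I must rule out $j > i$; this follows because $|B^j| = \dim(M_j/M_{j-1})$ summed appropriately already accounts for all of $\dim M$, but more directly it is cleaner to argue by cardinality at the end rather than elementwise here. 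So instead: set $C^i := B^0 \sqcup \cdots \sqcup B^i \subseteq M_i$. By hypothesis (2), $|C^i| = \sum_{j=0}^i |B^j| = \sum_{j=0}^i \dim(M_j/M_{j-1}) = \dim M_i$ (a telescoping sum, using $M_{-1} = 0$). Since $C^i \subseteq M_i$, $|C^i| = \dim M_i$, and $C^i$ is a subset of the basis $B$ of $M$ — hence linearly independent — it follows that $C^i$ is a basis of $M_i$. In particular taking $i = m$ gives $C^m = B$ (since $|C^m| = \dim M_m = \dim M = |B|$ and $C^m \subseteq B$), so indeed $\bigsqcup B^j = B$ is consistent, and $B \cap M_i \supseteq C^i$ with $B \cap M_i$ linearly independent inside $M_i$ forces $B \cap M_i = C^i$, a basis of $M_i$. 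By Definition~\ref{def:filt-compat-basis}, $B$ is compatible with $\{M_i\}$.

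\emph{Main obstacle.} There is essentially no hard step here; the only thing requiring a moment's care is the identification $B \cap M_i = C^i$ — i.e.\ ruling out that some $b \in B^j$ with $j > i$ could happen to lie in $M_i$. The clean way around this is exactly the cardinality argument above: $C^i$ is already a full-dimensional linearly independent subset of $M_i$, so it spans $M_i$; any further basis element of $B$ lying in $M_i$ would be a linear combination of $C^i \subseteq B \setminus \{\text{itself}\}$, contradicting linear independence of $B$. Hence no such $b$ exists and $B \cap M_i = C^i$. The whole argument is a couple of lines of dimension counting plus this one observation, so I would present it compactly without belaboring the routine equivalences already noted after Definition~\ref{def:filt-compat-basis}.
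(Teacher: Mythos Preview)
Your proof is correct and follows essentially the same approach as the paper's own proof: both telescope the dimension count to get $|C^i| = \dim M_i$, use that $C^i \subseteq B$ is linearly independent to conclude $C^i$ is a basis of $M_i$, and then invoke linear independence of $B$ to identify $B \cap M_i$ with $C^i$. The paper's version is just a more compressed rendering of exactly this argument.
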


\begin{proof}
A repeated application of (\ref{item:second}) above shows $\dim M_i = \sum_{j \leq i} |B^j|$. Since $\bigsqcup_{j \leq i} B^j \subset M_i$ is a linearly independent set which has cardinality $\dim M_i$, it must be a basis of $M_i$. The linear independence of $B$ now implies that $B \cap M_i$ is precisely $\bigsqcup_{j \leq i} B^j \subset M_i$. 
\end{proof}

\subsection{Proof of Theorem~\ref{thm:main-thm-intro}}

We now prove Theorem~\ref{thm:main-thm-intro}. 
Let $\bas$ denote the set comprising the vectors in \eqref{eq:ffbasis-twisted}. By proposition~\ref{prop:zverma}, $\bas$ is a basis of $Z$. Let $\bas_n = \bas \cap Z_n$. Since equation~\eqref{eq:modejumps} implies that $\omd[p]_{-k}(Z_m) \subset Z_{m+k}$ for $1 \leq p \leq \ell$ and all $k,m \geq 0$, the set $\bas_n$ consists of vectors of the form \eqref{eq:ffbasis-twisted} with $\sum_i k_i = -n$.
In particular, this implies that  $|\bas_n| = p_\ell(n)$, the number of partitions of $n$ into parts of $\ell$ colours. Now, it is well-known that this number is also the dimension of $Z_n$ \cite[Remark 12.13]{kac}, i.e., $|\bas_n| = \dim Z_n = p_\ell(n)$.
So $\bas_n$ forms a basis of $Z_n$ for each $n$.

We now claim that $\bas_n$ is compatible with the Brylinski filtration $\{F^iZ_n\}$ on $Z_n$. Consider a vector $v \in \bas_n$. It has the form
\[ v=\omd[p_1]_{k_1}(\sigma)\; \omd[p_2]_{k_2}(\sigma)\; \cdots\; \omd[p_r]_{k_r}(\sigma)\, \hwvec\]
satisfying the conditions in \eqref{eq:ffbasis-twisted} and with $\sum_{i=1}^r k_i = -n$. Since $\hwvec$ is in $F^0 \vacm$, it follows from  Lemma~\ref{lem:omega-modes-property} that $v \in F^d \vacm \cap Z_n = F^dZ_n$, where $d = \sum_{i=1}^r d_{p_i}$. For each $d \geq 0$, define $\bas_n^d \subset F^dZ_n$ to be the set of vectors in \eqref{eq:ffbasis-twisted} satisfying $\sum_{i=1}^r d_{p_i}=d$ and $\sum_{i=1}^r k_i = -n$. It is clear from this definition  that $|\bas_n^d|$ is the coefficient of $t^dq^n$ in the product :
\[ \prod_{k=1}^{\ell}{\prod_{j=1}^{\infty}{(1-t^{d_{k}}q^j)^{-1}}}.                                                                                               \]
From \eqref{eq:hilbZ}, this is the same as $\dim \left( F^d Z_n \,/ F^{d-1}  Z_n\right)$. An appeal to Lemma~\ref{lem:filtration-grbasis} (with $M=Z_n$, $B = \bas_n$, $B^i = \bas_n^i$) finishes the proof. Corollary~\ref{cor:bryl-subspaces} follows easily. \qed


\begin{thebibliography}{10}

\bibitem{arakawa}
Tomoyuki Arakawa.
\newblock Representation theory of {$\scr W$}-algebras.
\newblock {\em Invent. Math.}, 169(2):219--320, 2007.

\bibitem{bakalov-kac}
Bojko Bakalov and Victor~G. Kac.
\newblock Twisted modules over lattice vertex algebras.
\newblock In {\em Lie Theory and Its Applications in Physics V}, pages 3--26.
  World Sci. Publ., River Edge, NJ, 2004.

\bibitem{bakalov-milanov}
Bojko Bakalov and Todor Milanov.
\newblock {$\scr W$}-constraints for the total descendant potential of a simple
  singularity.
\newblock {\em Compos. Math.}, 149(5):840--888, 2013.

\bibitem{BGM-physics}
Mikhail Bershtein, Pavlo Gavrylenko, and Andrei Marshakov.
\newblock Twist-field representations of $\walg$-algebras, exact conformal
  blocks and character identities.
\newblock {\em J. High Energ. Phys.}, 108, 2018.

\bibitem{BF}
Alexander Braverman and Michael Finkelberg.
\newblock Pursuing the double affine {G}rassmannian {I}: {T}ransversal slices
  via instantons on ${A}_k$-singularities.
\newblock {\em Duke Math. J.}, 152(2):175--206, 04 2010.

\bibitem{broer}
Bram Broer.
\newblock Line bundles on the cotangent bundle of the flag variety.
\newblock {\em Invent. Math.}, 113(1):1--20, 1993.

\bibitem{rkbryl}
Ranee-Kathryn Brylinski.
\newblock Limits of weight spaces, {L}usztig's $q$-analogs, and fiberings of
  adjoint orbits.
\newblock {\em Journal of the American Mathematical Society}, 2(3):517--533,
  1989.

\bibitem{dmmc}
Ivan Cherednik.
\newblock Difference {M}acdonald-{M}ehta conjecture.
\newblock {\em Internat. Math. Res. Notices}, (10):449--467, 1997.

\bibitem{dong-mason}
Chongying Dong and Geoffrey Mason.
\newblock Nonabelian orbifolds and the {B}oson-{F}ermion correspondence.
\newblock {\em Comm. Math. Phys.}, 163(3):523--559, 1994.

\bibitem{dong-nagatomo}
Chongying Dong and Kiyokazu Nagatomo.
\newblock Automorphism groups and twisted modules for lattice vertex operator
  algebras.
\newblock In {\em Recent developments in quantum affine algebras and related
  topics ({R}aleigh, {NC}, 1998)}, volume 248 of {\em Contemp. Math.}, pages
  117--133. Amer. Math. Soc., Providence, RI, 1999.

\bibitem{feigin-frenkel}
Boris Feigin and Edward Frenkel.
\newblock Integrals of motion and quantum groups.
\newblock In {\em Integrable systems and quantum groups ({M}ontecatini {T}erme,
  1993)}, volume 1620 of {\em Lecture Notes in Math.}, pages 349--418.
  Springer, Berlin, 1996.

\bibitem{fkrw}
Edward Frenkel, Victor Kac, Andrey Radul, and Weiqiang Wang.
\newblock {$\mathcal{W}_{1+\infty}$} and {$\mathcal{W}(\mathfrak{gl}_N)$} with
  central charge {$N$}.
\newblock {\em Comm. Math. Phys.}, 170(2):337--357, 1995.

\bibitem{frenkelkacwaki}
Edward Frenkel, Victor Kac, and Minoru Wakimoto.
\newblock Characters and fusion rules for {$W$}-algebras via quantized
  {D}rinfeld-{S}okolov reduction.
\newblock {\em Comm. Math. Phys.}, 147(2):295--328, 1992.

\bibitem{FLMbook}
Igor Frenkel, James Lepowsky, and Arne Meurman.
\newblock {\em Vertex operator algebras and the {M}onster}, volume 134 of {\em
  Pure and Applied Mathematics}.
\newblock Academic Press, Inc., Boston, MA, 1988.

\bibitem{frenkel-huang-lepowsky}
Igor~B. Frenkel, Yi-Zhi Huang, and James Lepowsky.
\newblock On axiomatic approaches to vertex operator algebras and modules.
\newblock {\em Mem. Amer. Math. Soc.}, 104(494):viii+64, 1993.

\bibitem{frenkel-kac}
Igor~B. Frenkel and Victor~G. Kac.
\newblock Basic representations of affine {L}ie algebras and dual resonance
  models.
\newblock {\em Invent. Math.}, 62(1):23--66, 1980/81.

\bibitem{rkg}
Ranee~K. Gupta.
\newblock Characters and the {$q$}-analog of weight multiplicity.
\newblock {\em J. London Math. Soc. (2)}, 36(1):68--76, 1987.

\bibitem{JLZ}
Anthony Joseph, Gail Letzter, and Shmuel Zelikson.
\newblock On the {B}rylinski-{K}ostant filtration.
\newblock {\em J. Amer. Math. Soc.}, 13(4):945--970, 2000.

\bibitem{kac-infdim-dedekind}
Victor~G. Kac.
\newblock Infinite-dimensional algebras, {D}edekind's {$\eta $}-function,
  classical {M}\"{o}bius function and the very strange formula.
\newblock {\em Adv. in Math.}, 30(2):85--136, 1978.

\bibitem{kac}
Victor~G. Kac.
\newblock {\em Infinite dimensional {L}ie algebras}.
\newblock Cambridge University Press, third edition, 1990.

\bibitem{kac-vafb}
Victor~G. Kac.
\newblock {\em Vertex algebras for beginners}, volume~10 of {\em University
  Lecture Series}.
\newblock American Mathematical Society, Providence, RI, second edition, 1998.

\bibitem{Kac-Kazhdan-Lepowsky-Wilson}
Victor~G. Kac, David~A. Kazhdan, James Lepowsky, and Robert~L. Wilson.
\newblock Realization of the basic representations of the {E}uclidean {L}ie
  algebras.
\newblock {\em Adv. in Math.}, 42(1):83--112, 1981.

\bibitem{kac-peterson-112constructions}
Victor~G. Kac and Dale~H. Peterson.
\newblock 112 constructions of the basic representation of the loop group of
  ${E}_8$.
\newblock In {\em Symposium on Anomalies, Geometry, Topology (Chicago 1985),
  276-298}. World Sci. Publ., Singapore, 1985.

\bibitem{kac-raina}
Victor~G. Kac and Ashok~K. Raina.
\newblock {\em Bombay lectures on highest weight representations of
  infinite-dimensional {L}ie algebras}, volume~2 of {\em Advanced Series in
  Mathematical Physics}.
\newblock World Scientific Publishing Co., Inc., Teaneck, NJ, 1987.

\bibitem{kato}
Shin{-}ichi Kato.
\newblock Spherical functions and a {$q$}-analogue of {K}ostant's weight
  multiplicity formula.
\newblock {\em Invent. Math.}, 66(3):461--468, 1982.

\bibitem{lepowsky-calctwistvo}
James Lepowsky.
\newblock Calculus of twisted vertex operators.
\newblock {\em Proc. Nat. Acad. Sci. U.S.A.}, 82(24):8295--8299, 1985.

\bibitem{lusztig}
George Lusztig.
\newblock Singularities, character formulas, and a {$q$}-analog of weight
  multiplicities.
\newblock In {\em Analysis and topology on singular spaces, {II}, {III}
  ({L}uminy, 1981)}, volume 101 of {\em Ast\'{e}risque}, pages 208--229. Soc.
  Math. France, Paris, 1983.

\bibitem{slofstra}
William Slofstra.
\newblock A {B}rylinski filtration for affine {K}ac--{M}oody algebras.
\newblock {\em Advances in Mathematics}, 229(2):968 -- 983, 2012.

\bibitem{svis-kfp}
Sankaran Viswanath.
\newblock Kostka-{F}oulkes polynomials for symmetrizable {K}ac-{M}oody
  algebras.
\newblock {\em S\'em. Lothar. Combin.}, 58:Art. B58f, 2008.

\bibitem{zhu}
Yongchang Zhu.
\newblock Modular invariance of characters of vertex operator algebras.
\newblock {\em J. Amer. Math. Soc.}, 9(1):237--302, 1996.

\end{thebibliography}
\end{document}